\newtheoremstyle{break}
  {\topsep}{\topsep}%
  {\itshape}{}%
  {\bfseries}{}%
  {\newline}{}%
\theoremstyle{break}
\newtheorem{theorem}{Theorem}[section]
\newtheorem{proposition}[theorem]{Proposition}
\newtheorem{lemma}[theorem]{Lemma}
\newtheorem{remark}[theorem]{Remark}
\newtheorem{definition}[theorem]{Definition}
\DeclareMathOperator{\supp}{supp}
\DeclareMathOperator{\vspan}{span}
\DeclareMathOperator{\dist}{dist}
\title{Directional polynomial frames on spheres}
\author[1]{Marzieh Hasannasab \orcidlink{0000-0002-3975-5545}\,\thanks{\href{mailto:mhas@dtu.dk}{mhas@dtu.dk}}}
\author[2]{Larissa Kaldewey \orcidlink{0009-0006-6341-5635}\,\thanks{\href{mailto:larissa.kaldewey@student.uni-luebeck.de}{larissa.kaldewey@student.uni-luebeck.de}}}
\author[2]{Frederic Schoppert \orcidlink{0000-0002-8682-3723}\,\thanks{\href{mailto:f.schoppert@uni-luebeck.de}{f.schoppert@uni-luebeck.de}}}
\affil[1]{Department of Applied Mathematics and Computer Science, Technical University of Denmark, Building 303, 2800 Lyngby, Denmark}
\affil[2]{Institute of Mathematics, University of L{\"u}beck,
Ratzeburger Allee 160, 23562, L{\"u}beck, Germany}
\date{\vspace{-5ex}}
\begin{document}

\maketitle
\begin{abstract}
We introduce a general framework for the construction of polynomial frames in $L^2(\mathbb{S}^{d-1})$, $d \geq 3$, where the frame functions are obtained as rotated versions of an initial sequence of polynomials $\Psi^j$, $j\in \mathbb{N}_0$. The rotations involved are discretized using suitable quadrature rules. This framework includes classical constructions such as spherical needlets and directional wavelet systems, and at the same time permits the systematic design of new frames with adjustable spatial localization, directional sensitivity, and computational complexity. We show that a number of frame properties can be characterized in terms of simple, easily verifiable conditions on the Fourier coefficients of the functions $\Psi^j$. Extending an earlier result for zonal systems, we establish sufficient conditions under which the frame functions are optimally localized in space with respect to a spherical uncertainty principle, thus making the corresponding systems a viable tool for position-frequency analyses. To conclude this article, we explicitly discuss examples of well-localized and highly directional polynomial frames.
\end{abstract}

\noindent\textbf{Keywords:} Polynomial frames; Frame constructions; Localized frames; Sphere; Directional wave\-lets; Curve\-lets.

\section{Introduction}
This paper aims to develop a general framework for constructing polynomial frames for $L^2(\mathbb{S}^{d-1})$, $d\geq 3$, that covers both zonal and directional systems and enables the design of new frames with tunable spatial localization, directional sensitivity, and computational complexity. Our approach unifies many existing constructions, most prominently spherical needlets and directional wavelet systems, within a single setting. It provides a flexible procedure for generating new frames that span the full range from isotropic to highly anisotropic designs.

Polynomial frames for $L^2(\mathbb{S}^{d-1})$, $d\geq 3$, have been studied predominantly in the zonal (isotropic) setting, where each frame function is invariant under rotations around some axis (see, e.g., \cite{bib11, bib3, bib15, bib16, bib29, bib30, bib14, bib46, bib47, bib48, bib49, bib25}). A recurring theme in this context is the pursuit of good localization properties, as these systems are typically constructed to enable position-based analyses of signals. 
A general initial investigation, including a discussion of systems that are optimally concentrated in space with respect to a spherical uncertainty principle \cite{bib42, bib45}, is given in \cite{bib11}.
Subsequent work \cite{bib3, bib15, bib16, bib78} established that certain zonal polynomials with particularly smooth spectral behavior satisfy sharp pointwise localization bounds, yielding rapid decay away from the center of mass. These functions are commonly referred to as spherical needlets. Owing to their excellent spatial localization, needlet-based frames exhibit rapid convergence of frame expansions in the spaces $L^p(\mathbb{S}^{d-1})$, $1 \leq p < \infty$, and $C(\mathbb{S}^{d-1})$. Moreover, such frames have been successfully applied to the characterization of function spaces \cite{bib30, bib48, bib25} and to edge detection in spherical signals \cite{bib14, bib26, bib31}.

Beyond these applications, spherical needlets provide a powerful tool for position-fre\-quency analysis and have found widespread use in the statistical analysis of spherical random fields (see, e.g., \cite{bib55, bib46, bib56, bib57, bib58, bib59, bib60, bib61, bib62}), particularly in cosmology and astrophysics (see, e.g., \cite{bib50, bib51, bib52, bib53, bib54}). Additional areas of relevance include sparse image reconstruction (e.g., \cite{bib66}) and image segmentation (e.g., \cite{bib68}).

Directional (non-zonal) systems, by contrast, are essential when anisotropic features must be analyzed. Constructions such as directional wavelets and second-generation curve\-lets \cite{bib9, bib10, bib21, bib1} can identify edges and higher-order singularities with precise position and orientation information. They have been used in the analysis of random fields (e.g., \cite{bib9, bib70, bib63, bib65, bib67, bib69}), with particular emphasis on problems in cosmology, but also general issues in image processing such as sparse reconstruction (e.g., \cite{bib66}) and segmentation (e.g., \cite{bib68}).
On higher-dimensional spheres $\mathbb{S}^{d-1}$, $d\geq 4$, localized directional polynomial frames have only recently been considered in \cite{bib37} in the form of directional wavelets which generalize the two-dimensional constructions in \cite{bib9, bib10, bib21}.

Despite the abundance of results on zonal and directional polynomial frames, existing constructions are typically tailored to specific objectives: zonal frames provide excellent spatial localization but no directional selectivity, whereas directional systems capture orientation information at the expense of increased complexity. Directional wavelet systems, which generalize zonal constructions, are typically built as Parseval frames with a fixed and inherently limited notion of directionality. Second-generation curvelets, on the other hand, can achieve unlimited directional resolution, but they have so far only been developed in two dimensions. In particular, no polynomial frame is currently known on higher-dimensional spheres that provides unlimited directional sensitivity. Our work addresses these limitations by introducing a unified framework  valid across all dimensions, spanning the full spectrum from isotropic to highly anisotropic frames, and provides explicit criteria for both frame properties and localization behavior. 

For our construction, we fix a sequence of nonnegative integers $(N_j)_{j=0}^\infty \subset \mathbb{N}_0$ and consider associated positive quadrature rules
\begin{equation}\label{quadrature introduction}
		\int_{SO(d)} f_1 f_2 \, \mathrm{d}\mu_d = \sum_{r=1}^{r_j} \mu_{j, r}  f_1(g_{j, r}) f_2(g_{j, r}), \quad \text{for } f_1, f_2 \in \Pi_{N_j}(SO(d)), \qquad j \in \mathbb{N}_0.
\end{equation}
Here, $\mu_d$ denotes the normalized Haar measure on $SO(d)$ and the $g_{j, r}\in SO(d)$ are suitable rotation matrices with corresponding weights $\mu_{j, r}>0$. Given a sequence of polynomials $(\Psi^j)_{j=0}^\infty$ with $\Psi^j \in \Pi_{N_j}(\mathbb{S}^{d-1})$, we define the associated system
\begin{equation}\label{frame introduction}
    \mathcal{X}[(\Psi^j)_{j=0}^\infty] = \{\sqrt{\mu_{j, r}}\,\Psi^j(g_{j, r}^{-1} \,\cdot) \mid r=1, \dots , r_j, \; j \in \mathbb{N}_0\}.
\end{equation}
This construction provides a flexible framework in which the key  properties of the resulting system are determined entirely by the choice of the polynomial sequence $(\Psi^j)_{j=0}^\infty$. By selecting zonal polynomials, one recovers systems with classical isotropic behavior, whereas more general choices yield directional or partially directional systems capable of capturing anisotropic features. 

Considering a classical orthonormal basis for $L^2(\mathbb{S}^{d-1})$ consisting of spherical harmonics, we show that the existence of frame bounds for the system \eqref{frame introduction} can be characterized in terms of explicit and easily verifiable conditions on the Fourier coefficients of the generating polynomial sequence $(\Psi^j)_{j=0}^\infty$. 
Moreover, we demonstrate that the canonical dual frame can be constructed directly from the Fourier coefficients of $(\Psi^j)_{j=0}^\infty$ and retains the same structural form as the original frame. Finally, we provide a characterization, also in terms of the Fourier coefficients, under which two frames are dual.

We also discuss relevant properties such as rotational symmetries, directional sensitivity, and steerability and demonstrate how they can be obtained. Furthermore, extending a result of \cite{bib11} for the zonal systems, we prove that, under relatively mild 
assumptions on the Fourier coefficients of $\Psi^j$, $j\in\mathbb{N}_0$, the frame functions localize at an optimal rate in terms of a well-established spherical uncertainty principle \cite{bib38}. Specifically, if $\mathrm{Var}_{\mathrm{S}}(\Psi^j)$ denotes the spatial variance of $\Psi^j$, the optimal localization rate takes the form
\begin{equation*}
    \mathrm{Var}_{\mathrm{S}}(\Psi^j) \sim N_j^{-2}.
\end{equation*}
We also note that, for non-zonal systems, localization has so far only been studied in the pointwise sense (see \cite{bib9, bib37, bib26, bib31}), which is highly restrictive, as pointwise estimates typically demand strong assumptions.

To conclude our studies, we propose various choices for the sequence $\Psi^j$, $j\in\mathbb{N}_0$ that yield well-localized and highly directional polynomial frames.  These constructions are of considerable interest for practical applications, not least because their high degree of symmetry reduces the number of rotations needed compared to the general case.

The remainder of this article is organized as follows. In \autoref{sec2}, we review essential facts from harmonic analysis on spheres and rotation groups, including a key product property for polynomials on $SO(d)$ that will play a central role in the discretization of our frames. In \autoref{sec3}, we carry out our main investigations regarding the construction and characterization of polynomial frames on spheres. Relevant properties, such as steerability, directionality, and optimal localization in terms of a well-established spherical uncertainty principle, are discussed in \autoref{sec4}, where we also demonstrate how integration rules of the form \eqref{quadrature introduction} can be obtained from spherical quadrature formulas. Finally, in \autoref{sec5}, we showcase a variety of concrete examples of well-localized and highly directional frames.

\section{Preliminaries}\label{sec2}
In this section, we summarize some basic concepts of harmonic analysis on spheres and rotation groups. Most of the formulas given here can be found in classical literature such as \cite{bib39, bib32} (see also \cite{bib3, bib75} for a more recent monograph) and are stated without further comment.

For $d\geq 3$, we consider the euclidean space $\mathbb{R}^d$ equipped with the inner product $\langle x, y \rangle = x^\top y$ and the induced norm $\| \cdot \|$. Its canonical basis vectors will be denoted by $e^j=(\delta_{i, j})_{i=1}^d$, $j=1, \dots, d$, where
\begin{equation*}
\delta_{i, j}= \begin{cases}
1, \quad & i=j,\\
0,  & i \neq j.
\end{cases}
\end{equation*}
The standard metric on the sphere $\mathbb{S}^{d-1}= \{ x \in \mathbb{R}^d \mid \| x\| =1\}$ is given in terms of the geodesic distance
\begin{equation*}
\dist(\eta, \nu) = \arccos(\langle \eta, \nu \rangle), \quad \eta, \nu \in \mathbb{S}^{d-1}.
\end{equation*}
By $\omega_{d-1}$ we will denote the rotation-invariant measure on $\mathbb{S}^{d-1}$, which is normalized such that
\begin{equation*}
\int_{\mathbb{S}^{d-1}} \mathrm{d}\omega_{d-1} = 1. 
 \end{equation*}
With respect to this measure, the spaces $L^p(\mathbb{S}^{d-1})$, $1\leq p < \infty$, are defined by
\begin{equation*}
    L^p(\mathbb{S}^{d-1})= \{ f \colon \mathbb{S}^{d-1} \rightarrow \mathbb{C} \mid f \text{ is measurable and } \int_{\mathbb{S}^{d-1}} |f|^p\,\mathrm{d}\omega_{d-1} < \infty \},
\end{equation*}
endowed with the norm
\begin{equation*}
    \|f \|_{L^p(\mathbb{S}^{d-1})} = \left( \int_{\mathbb{S}^{d-1}} \lvert f \rvert^p \, \mathrm{d}\omega_{d-1}\right)^{1/p}.
\end{equation*}
Also, let $C(\mathbb{S}^{d-1})$ denote the space of continuous functions $f \colon \mathbb{S}^{d-1}\rightarrow \mathbb{C}$, equipped with the supremum norm
\begin{equation*}
    \| f \|_{C(\mathbb{S}^{d-1})} = \max_{\eta \in \mathbb{S}^{d-1}} \lvert f(\eta) \rvert.
\end{equation*}
In this article, we will work mainly in the Hilbert space $L^2(\mathbb{S}^{d-1})$ with the inner product
\begin{equation*}
\langle f_1, f_2 \rangle_{\mathbb{S}^{d-1}} = \int_{\mathbb{S}^{d-1}} f_1  \overline{f_2} \, \mathrm{d}\omega_{d-1}.
\end{equation*}
In $L^2(\mathbb{S}^{d-1})$, Fourier analysis is typically formulated w.r.t.\ a classical orthonormal basis consisting of spherical harmonics. Such a basis can be introduced as follows. Let $\Pi_n(\mathbb{R}^d)$ be the space of all $d$-variate algebraic polynomials of degree $n$. Thus, using the notation
\begin{equation*}
    \lvert \alpha \rvert = \alpha_1+\cdots+\alpha_d, \qquad x^\alpha = x_1^{\alpha_1} \cdots x_d^{\alpha_d},
\end{equation*}
for $\alpha = (\alpha_1, \dots, \alpha_d)\in \mathbb{N}_0^d$, the set $\Pi_n(\mathbb{R}^d)$ contains exactly the functions $P \colon \mathbb{R}^d\rightarrow \mathbb{C}$ of the form
\begin{equation*}
P(x) = \sum_{\lvert \alpha \rvert \leq n} \, c_\alpha x^\alpha, \qquad c_\alpha \in \mathbb{C}.
\end{equation*}
An element $P\in \Pi_n(\mathbb{R}^d) $ is called a homogeneous polynomial of degree $n$ if
\begin{equation*}
P(x) = \sum_{\lvert \alpha \rvert = n} \, c_\alpha x^\alpha,
\end{equation*}
for suitable coefficients $c_\alpha$, and it is called harmonic if $\Delta P = 0$, where $\Delta = \partial_1^2 + \cdots + \partial_d^2 $ denotes the euclidean Laplace operator on $\mathbb{R}^d$. The space of all harmonic homogeneous polynomials of degree $n$ is denoted by  $\mathcal{H}_n^d(\mathbb{R}^d)$. By restricting these functions to $\mathbb{S}^{d-1}$, we obtain the set
\begin{equation*}
\mathcal{H}_n^d = \{ P\vert_{\mathbb{S}^{d-1}} \mid P \in  \mathcal{H}_n^d(\mathbb{R}^d)  \},
\end{equation*}
which is a linear subspace of $L^2(\mathbb{S}^{d-1})$. A function $Y \in \mathcal{H}_n^d$ is called a spherical harmonic of degree $n$. The spaces $\mathcal{H}_n^d$ are finite-dimensional with 
\begin{equation*}
\dim \mathcal{H}_{n}^d = \frac{(2n+d-2) (n+d-3)!}{(d-2)! n!},
\end{equation*}
and spherical harmonics of different degrees are orthogonal, i.e., $\mathcal{H}_m^d \perp \mathcal{H}_n^d$ for $m \neq n$. Indeed, 
\begin{equation*}
L^2(\mathbb{S}^{d-1}) = \bigoplus_{n=0}^\infty \mathcal{H}_n^d.
\end{equation*}
The set of polynomials of degree $N$ on $\mathbb{S}^{d-1}$ will be denoted by
\begin{equation*}
\Pi_N(\mathbb{S}^{d-1})= \{P\vert_{\mathbb{S}^{d-1}} \mid P \in \Pi_N(\mathbb{R}^d)  \}= \bigoplus_{n=0}^N \mathcal{H}_n^d
\end{equation*}
and we have
\begin{equation*}
\dim \Pi_N(\mathbb{S}^{d-1}) = \frac{(2N+d-1)(N+d-2)!}{(d-1)!N!}.
\end{equation*}
In the following, for each $n \in  \mathbb{N}_0$, let $\mathcal{I}_n^d $ be a suitable index set of cardinality $\dim \mathcal{H}_n^d$. Moreover, let
\begin{equation*}
Y_k^{d,n} \colon \mathbb{S}^{d-1} \rightarrow \mathbb{C}, \qquad k \in \mathcal{I}_n^d,
\end{equation*}
be functions such that $\{ Y_k^{d, n} , \; k \in \mathcal{I}_n^d\}$ is an orthonormal basis of $\mathcal{H}_n^d$. Then, it follows from the above discussion that the system $\{ Y_k^{d, n} \mid k \in \mathcal{I}_n^d, \; n \in \mathbb{N}_0 \}$ of spherical harmonics is an orthonormal basis for $L^2(\mathbb{S}^{d-1})$. If $f \in L^2(\mathbb{S}^{d-1})$, we will use the notation $f(n, k) = \langle f, Y_k^{d, n}\rangle_{\mathbb{S}^{d-1}}$ for the corresponding Fourier coefficients. Here, it will be convenient to set $f(n, k)=0$ if $n =-1$ or $k \notin \mathcal{I}_n^d$.

Although many of the results in this article will be formulated in a coordinate-indepen\-dent way, it will sometimes be useful to work with an explicitly defined orthonormal basis of spherical harmonics. Such a basis is presented in \hyperref[appendix A]{Appendix A}.

We note that the spaces $\mathcal{H}_n^d$ can also be defined as the eigenspaces of the usual Laplace-Beltrami operator $\Delta_{\mathbb{S}^{d-1}}$. Indeed, in terms of the basis given above, we have
\begin{equation*}
    \Delta_{\mathbb{S}^{d-1}} Y_k^{d, n} = -n(n+d-2)Y_k^{d, n}.
\end{equation*}

At the heart of many computations involving spherical harmonics lies the well-known addition theorem  
\begin{equation*}\label{addition thm}
\sum_{k \in \mathcal{I}_n^d}  \overline{Y_k^{d, n}(\nu)} \, Y_k^{d, n}(\eta) = \frac{2n+d-2}{d-2}\,  C_n^{\frac{d-2}{2}}(\langle \nu, \eta \rangle), \quad \nu, \eta \in \mathbb{S}^{d-1},
\end{equation*}
which provides a link between harmonic analysis on the sphere and one-dimensional Gegenbauer polynomials
\begin{equation}\label{gegenbauer polynomials}
C_n^\lambda(t) = \frac{(-1)^n \Gamma(\lambda+1/2) \Gamma(n+2\lambda)}{2^n\Gamma(n+\lambda+1/2) n!} (1-t^2)^{1/2-\lambda} \frac{\mathrm{d}^{n}}{\mathrm{d}t^{n}}(1-t^2)^{n+\lambda-1/2}
.
\end{equation}

The sphere $\mathbb{S}^{d-1}$ is intimately connected to the special orthogonal group $$SO(d) = \{ g\in \mathbb{R}^{d\times d}: g^\top = g^{-1}, \; \det g =1  \},$$ since $\mathbb{S}^{d-1}$ can be identified with the homogeneous space $SO(d)/SO(d-1)$. Here, we identify $SO(d-1)$ with the subgroup of all elements $g\in SO(d)$ satisfying $g e^d = e^d$. More generally, we will not distinguish between $SO(m)$, $m\in \{2, \dots, d-1\}$, and the maximal subgroup of $SO(d)$ that keeps the vectors $e^{m+1}, \dots, e^d$ invariant. 

By $\mu_d$ we denote the normalized Haar measure on $SO(d)$.
Each element $g\in SO(d)$ can be written as $g = g_\eta h$ where $h \in SO(d-1)$ and $g_\eta \in SO(d)$ with $g_\eta e^{d} = \eta \in \mathbb{S}^{d-1}$. Of course, this representation is not unique. Nevertheless, independent of the choice of $g_\eta$ and $h$ we have
\begin{equation}\label{eq2}
\int_{SO(d)} f(g) \, \mathrm{d}\mu_d(g) = \int_{\mathbb{S}^{d-1}} \int_{SO(d-1)} f(g_\eta h)  \, \mathrm{d}\mu_{d-1}(h) \, \mathrm{d}\omega_{d-1}(\eta),
\end{equation}
for any $f \in L^1(SO(d))$. In particular, the integral over $SO(d-1)$ on the right-hand side does not depend on the choice of $g_\eta$ due to the invariance of the Haar measure $\mu_{d-1}$ on $SO(d-1)$. In this article, $g_\eta$ will always denote an arbitrary element in $SO(d)$ satisfying $g_\eta e^d = \eta$. We now consider the unitary group representation
\begin{equation*}
g \mapsto T^d(g), \quad T^d(g)f(x) = f(g^{-1}x),
\end{equation*}
of $SO(d)$ on $L^2(\mathbb{S}^{d-1})$. Its subrepresentations on the spaces $\mathcal{H}_n^d$ are irreducible and thus the matrix functions $t_{k, m}^{d,n}\colon SO(d) \rightarrow \mathbb{C}$ given by
\begin{equation}\label{eq28}
t_{k, m}^{d,n}(g) = \langle T^d(g) Y_m^{d,n}, Y_k^{d,n} \rangle_{\mathbb{S}^{d-1}},
\end{equation}
form an orthogonal system of $L^2(SO(d))$ with respect to the inner product
\begin{equation*}
\langle f_1, f_2 \rangle_{SO(d)} = \int_{ SO(d)} f_1  \overline{f_2} \, \mathrm{d}\mu_d.
\end{equation*}
Additionally, it holds that
\begin{equation*}
\int_{SO(d)} \lvert t_{k, m}^{d,n} \rvert^2 \, \mathrm{d}\mu_d = \frac{1}{\dim \mathcal{H}_n^{d}}.
\end{equation*}
The definition given in \eqref{eq28} can also be formulated as
\begin{equation}\label{eq30}
Y_m^{d, n}(g^{-1}\eta)=\sum_{k \in \mathcal{I}_n^d} t_{k, m}^{d, n}(g) \, Y_k^{d, n}(\eta), \quad \eta \in \mathbb{S}^{d-1}.
\end{equation}
Moreover, the relation
\begin{equation*}\label{matrix fct prop1}
t_{k, \ell}^{d, n}(g \tilde{g}) = \sum_{m \in \mathcal{I}_n^d} t_{k, m}^{d, n}(g) \, t_{m, \ell}^{d, n}(\tilde{g}), \quad g, \tilde{g} \in SO(d),
\end{equation*}
follows from the fact that $T^d$ is a group homomorphism, i.e., $T^d(g \tilde{g}) = T^d(g)T^d(\tilde{g})$, with each $T^d(g)$ being a unitary operator on $L^2(\mathbb{S}^{d-1})$. In the case where $\eta = e^d$, a simple calculation using \eqref{eq30} yields
\begin{equation*}\label{eq7}
\overline{Y_m^{d, n}(\nu)} = \sum_{k \in \mathcal{I}_n^d} \overline{Y_k^{d, n}(e^d)} \, t_{m, k}^{d, n}(g_\nu).
\end{equation*}
This shows that spherical harmonics are linear combinations of certain matrix functions as, clearly, $\overline{t_{m, k}^{d, n}}$ is a linear combination of the $t_{r, s}^{d, n}$, $r, s \in \mathcal{I}_n^d$. 

Since the elements of the special orthogonal group are $d\times d$ rotation matrices, we can naturally identify $SO(d)$ with a subset of $\mathbb{R}^{d^2}$. In this context, let
\begin{equation*}
\Pi_N(SO(d)) = \Pi_N(\mathbb{R}^{d^2})\vert_{SO(d)}
\end{equation*}
denote the set of all restrictions of polynomials of degree $N$ to $SO(d)$. Using just the definition \eqref{eq28}, it is straightforward to verify that each matrix function $t_{k, m}^{d, n}$ with $n \leq N$ is contained in $\Pi_N(SO(d))$. In particular, defining
\begin{equation*}
\mathcal{M}_N^1(SO(d)) = \vspan\{ t_{k, m}^{d, n}, \; k, m \in \mathcal{I}_n^d, \; 0 \leq n \leq N \},
\end{equation*}
it holds that $\mathcal{M}_N^1(SO(d)) \subset \Pi_{N}(SO(d))$ and, thus, we have the product property
\begin{equation}\label{eq33}
f_1 f_2 \in \Pi_{2N}(SO(d)), \qquad \text{if }f_1, f_2 \in \mathcal{M}_N^1(SO(d)).
\end{equation}
The upper index $1$ in the notation $\mathcal{M}_N^1(SO(d))$ indicates that the space is spanned by matrix functions of class $1$, see \cite{bib32} for more details. For notational convenience, we will define
\begin{equation*}
    \mathcal{M}_N^1(SO(2)) = \Pi_N(SO(2))=\vspan\{\exp(\mathrm{i}n \cdot ), \;  n=-N,\dots, N \},
\end{equation*}
such that \eqref{eq33} also holds for $d=2$.

When working with the specific orthonormal basis of spherical harmonics given in \hyperref[appendix A]{Appendix A}, there is a particularly straightforward connection between matrix functions of different dimensions, as the following lemma shows. For a detailed proof, we refer to \cite[Lemma~2.1]{bib37}.
\begin{lemma}\label{lemma4}
With respect to the orthonormal basis of spherical harmonics given in \hyperref[appendix A]{Appendix A}, the matrix functions satisfy
\begin{equation}\label{matrix fct prop2}
t_{k, \ell}^{d, n}(h) = \delta_{k_1, \ell_1} \, t_{(k_2, \dots, k_{d-2}), (\ell_2, \dots, \ell_{d-2})}^{d-1, k_1}(h), \qquad h \in SO(d-1), \quad d \geq 4,
\end{equation}
as well as
\begin{equation}\label{matrix fct prop2.1}
t_{k, \ell}^{3, n}(h(\gamma)) = \delta_{k, \ell}  \exp(\mathrm{i}k\gamma), \qquad \gamma \in [0, 2\pi),
\end{equation}
where $h(\gamma)\in \mathbb{R}^{3\times 3}$ is a positive rotation by $\gamma$ in the $(x_1, x_2)$-plane.
\end{lemma}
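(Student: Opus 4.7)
The plan is to use the fact that the orthonormal basis from Appendix A is constructed recursively so that each spherical harmonic $Y_k^{d,n}$ factorizes into a ``polar'' part depending only on $\langle \eta, e^d\rangle$ and a lower-dimensional spherical harmonic on the equator $\mathbb{S}^{d-2}$. Concretely, writing $\eta \in \mathbb{S}^{d-1}$ in polar coordinates $\eta = \cos\theta\, e^d + \sin\theta\, \xi$ with $\xi \in \mathbb{S}^{d-2}$, the basis should take the form
\begin{equation*}
    Y_k^{d,n}(\eta) \;=\; P^{d,n}_{k_1}(\theta)\, Y^{d-1, k_1}_{(k_2,\dots,k_{d-2})}(\xi),
\end{equation*}
where $k_1$ is the first index of $k$ (and $P^{d,n}_{k_1}$ is the appropriate Gegenbauer/Jacobi factor), and where $k_1$ ranges precisely over the admissible degrees of spherical harmonics on $\mathbb{S}^{d-2}$ that appear inside $\mathcal{H}_n^d$. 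The first step of the proof is to write this factorization down by inspecting Appendix~A.

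Next, I would exploit the observation that any $h \in SO(d-1)$ fixes $e^d$ and acts on $\mathbb{R}^d$ by fixing the polar angle $\theta$ and rotating $\xi$ within $\mathbb{S}^{d-2}$. Therefore $T^d(h) Y_\ell^{d,n}(\eta) = P^{d,n}_{\ell_1}(\theta)\, Y^{d-1,\ell_1}_{(\ell_2,\dots,\ell_{d-2})}(h^{-1}\xi)$. Substituting into the definition \eqref{eq28} of $t_{k,\ell}^{d,n}$ and using the factorization of spherical measure,
\begin{equation*}
    \int_{\mathbb{S}^{d-1}} f\, \mathrm{d}\omega_{d-1} \;=\; c_d \int_0^\pi \int_{\mathbb{S}^{d-2}} f(\cos\theta\, e^d + \sin\theta\, \xi)\, \sin^{d-2}\theta\, \mathrm{d}\omega_{d-2}(\xi)\,\mathrm{d}\theta,
\end{equation*}
the integrand separates. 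The inner integral over $\mathbb{S}^{d-2}$ is precisely $\langle T^{d-1}(h) Y^{d-1,\ell_1}_{(\ell_2,\dots,\ell_{d-2})}, Y^{d-1,k_1}_{(k_2,\dots,k_{d-2})}\rangle_{\mathbb{S}^{d-2}}$, which vanishes unless $k_1 = \ell_1$ (because spherical harmonics of different degree are orthogonal) and otherwise equals $t_{(k_2,\dots,k_{d-2}),(\ell_2,\dots,\ell_{d-2})}^{d-1,k_1}(h)$. The remaining $\theta$-integral becomes $\int P^{d,n}_{k_1}(\theta)\overline{P^{d,n}_{k_1}(\theta)}\,\sin^{d-2}\theta\,\mathrm{d}\theta$, which by the normalization built into the basis of Appendix~A equals $1$. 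Combining these yields the identity \eqref{matrix fct prop2}.

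For the $d=3$ case, the same argument simplifies dramatically: the basis reduces to $Y_k^{3,n}(\theta,\phi) = Q_{n,k}(\theta)\exp(\mathrm{i}k\phi)$, and $h(\gamma)$ maps $(\theta,\phi)\mapsto(\theta,\phi+\gamma)$, so $h(\gamma)^{-1}$ sends $\phi\mapsto\phi-\gamma$. Hence $T^3(h(\gamma))Y_\ell^{3,n} = \exp(-\mathrm{i}\ell\gamma) Y_\ell^{3,n}$ (up to sign convention), and integrating against $\overline{Y_k^{3,n}}$ gives $\delta_{k,\ell}\exp(\mathrm{i}k\gamma)$ once the sign conventions of $h(\gamma)$ and of $T^3$ are made consistent.

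The only real obstacle I anticipate is bookkeeping: verifying that the precise normalization constants in the recursive basis of Appendix~A are exactly those that make the $\theta$-integral equal to $1$, and keeping track of sign conventions so that the exponent in \eqref{matrix fct prop2.1} comes out with the correct sign of $k$. The conceptual content — separation of variables plus orthogonality on the lower-dimensional sphere — is essentially forced by the construction of the basis. This is why the authors simply refer to \cite[Lemma~2.1]{bib37} for the detailed computation.
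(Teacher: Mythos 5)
Your argument is correct and is exactly the separation-of-variables proof that the paper omits (it defers to \cite[Lemma~2.1]{bib37}): the Appendix~A basis factors as $Y_k^{d,n}(\eta)=P_{k_1}^{d,n}(\theta_{d-1})\,Y_{(k_2,\dots,k_{d-2})}^{d-1,k_1}(\xi)$ because the $j=1,\dots,d-3$ factors of \eqref{y_n^k def} together with $\exp(\mathrm{i}k_{d-2}\theta_1)$ reproduce the $(d-1)$-dimensional basis with degree $k_1$, and degree-orthogonality on $\mathbb{S}^{d-2}$ plus the built-in normalization of the $\theta_{d-1}$-integral yield \eqref{matrix fct prop2}. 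Regarding the one point you flag: with the paper's parametrization ($x_1=\sin\theta_2\sin\theta_1$, $x_2=\sin\theta_2\cos\theta_1$) a positive rotation by $\gamma$ in the $(x_1,x_2)$-plane sends $\theta_1\mapsto\theta_1-\gamma$, so $T^3(h(\gamma))Y_\ell^{3,n}=\exp(+\mathrm{i}\ell\gamma)Y_\ell^{3,n}$ and the sign in \eqref{matrix fct prop2.1} comes out as stated with no further adjustment.
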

\noindent Equations \eqref{matrix fct prop2} and \eqref{matrix fct prop2.1} imply, in particular, that
\begin{equation*}
    f\vert_{SO(m)} \in \mathcal{M}_N^1(SO(m)) \qquad \text{for each }f \in \mathcal{M}_N^1(SO(d)), \; m \in \{2, \dots, d-1\}.
\end{equation*}

The construction of our polynomial frames in \autoref{sec3} relies heavily on the following product property for class $1$ matrix functions on $SO(d)$.
\begin{lemma}\label{lemma product property}
Let $f_1 \in \mathcal{M}_{N_1}^1(SO(d))$, $f_2 \in \mathcal{M}_{N_2}^1(SO(d))$. Then the function
\begin{equation*}
\eta \mapsto \int_{SO(d-1)} f_1(g_\eta h)f_2(g_\eta h) \, \mathrm{d}\mu_{d-1}(h)
\end{equation*}
is in $\Pi_{N_1+N_2}(\mathbb{S}^{d-1})$.
\end{lemma}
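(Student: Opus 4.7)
The plan is to verify the claim via Fourier analysis on $\mathbb{S}^{d-1}$: I aim to show that the spherical harmonic coefficients $\langle G, Y_k^{d,n}\rangle_{\mathbb{S}^{d-1}}$ of
\begin{equation*}
G(\eta) := \int_{SO(d-1)} f_1(g_\eta h) f_2(g_\eta h) \, \mathrm{d}\mu_{d-1}(h)
\end{equation*}
vanish for every $n > N_1+N_2$ and $k \in \mathcal{I}_n^d$, which is equivalent to $G \in \bigoplus_{n=0}^{N_1+N_2} \mathcal{H}_n^d = \Pi_{N_1+N_2}(\mathbb{S}^{d-1})$. By \eqref{eq33}, the product $F := f_1 f_2$ lies in $\Pi_{N_1+N_2}(SO(d))$, so the essential step is the following slightly more general statement: if $F \in \Pi_N(SO(d))$, then the averaged function $\eta \mapsto \int_{SO(d-1)} F(g_\eta h) \, \mathrm{d}\mu_{d-1}(h)$ lies in $\Pi_N(\mathbb{S}^{d-1})$.

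To compute the coefficients, I would invoke \eqref{eq2} applied to $g \mapsto F(g)\,\overline{Y_k^{d,n}(g e^d)}$; since $h e^d = e^d$ for $h \in SO(d-1)$, this rewrites the coefficient as
\begin{equation*}
\langle G, Y_k^{d,n}\rangle_{\mathbb{S}^{d-1}} = \int_{SO(d)} F(g) \, \overline{Y_k^{d,n}(g e^d)}\, \mathrm{d}\mu_d(g).
\end{equation*}
Substituting $\nu = g e^d$ in \eqref{eq7} and using that $Y_k^{d,n}(e^d)$ is nonzero only for the zonal index (which I denote $0$ in the basis of Appendix A), the function $g \mapsto \overline{Y_k^{d,n}(g e^d)}$ becomes a nonzero multiple of the matrix function $t_{k,0}^{d,n}\in \mathcal{M}_n^1(SO(d))$.

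It then remains to show that $F \in \Pi_{N_1+N_2}(SO(d))$ is $L^2(SO(d))$-orthogonal to $\mathcal{M}_n^1(SO(d))$ for every $n > N_1+N_2$. For this I would combine Peter--Weyl orthogonality with a polynomial-degree argument: the subspace $\Pi_{N_1+N_2}(SO(d))$ is bi-invariant under left and right translation, and hence decomposes as a direct sum of matrix-coefficient spaces of irreducible representations of $SO(d)$; Schur's orthogonality relations then guarantee that $\mathcal{M}_n^1(SO(d))$ is orthogonal to all these summands except possibly its own. Thus it suffices to establish $\mathcal{M}_n^1(SO(d)) \cap \Pi_{N_1+N_2}(SO(d)) = \{0\}$ for $n > N_1+N_2$. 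The addition theorem together with \eqref{gegenbauer polynomials} identifies the zonal matrix function $t_{0,0}^{d,n}(g)$ as a nonzero multiple of $C_n^{(d-2)/2}(\langle g e^d, e^d\rangle)$, which has polynomial degree exactly $n$ in the entries of $g$; irreducibility of the bi-translation action on $\mathcal{M}_n^1(SO(d))$ then forces the intersection to be trivial.

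The hardest part is this last representation-theoretic step --- ruling out $\mathcal{M}_n^1(SO(d)) \subset \Pi_{N_1+N_2}(SO(d))$ for $n > N_1+N_2$ via the degree-$n$ Gegenbauer identity for $t_{0,0}^{d,n}$; the remainder is a fairly routine combination of \eqref{eq2}, \eqref{eq7}, and Schur orthogonality.
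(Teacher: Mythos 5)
The paper does not actually prove this lemma in-house: for $d=3$ it appeals to the classical product formula for Wigner $D$-functions, and for $d\geq 4$ it cites \cite[Lemma~2.2]{bib37}, where the argument runs through a Clebsch--Gordan-type analysis of products of matrix coefficients. Your route is genuinely different and more self-contained: you use only the crude degree bound \eqref{eq33} together with Peter--Weyl/Schur orthogonality, reducing everything to the claim that the matrix-coefficient space of the irreducible representation on $\mathcal{H}_n^d$ is not contained in $\Pi_{N_1+N_2}(SO(d))$ when $n>N_1+N_2$. The reduction itself is sound: your identity $\langle G, Y_k^{d,n}\rangle_{\mathbb{S}^{d-1}} = \int_{SO(d)}F(g)\,\overline{Y_k^{d,n}(ge^d)}\,\mathrm{d}\mu_d(g)$ is exactly \eqref{eq2}, and \eqref{eq7} together with the fact that only the zonal basis element of $\mathcal{H}_n^d$ is nonzero at $e^d$ does identify $g\mapsto\overline{Y_k^{d,n}(ge^d)}$ as a nonzero multiple of $t^{d,n}_{k,0}$; likewise, bi-invariance of $\Pi_N(SO(d))$ and irreducibility of each matrix-coefficient space under $SO(d)\times SO(d)$ give the all-or-nothing dichotomy you invoke.

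Two points need repair before this is a proof. First, a notational one: with the paper's definition $\mathcal{M}_n^1(SO(d)) = \operatorname{span}\{t^{d,n'}_{k,m}:\, n'\leq n\}$, the claim $\mathcal{M}_n^1(SO(d))\cap\Pi_{N_1+N_2}(SO(d))=\{0\}$ is false (constants lie in both); what you need, and what the Schur dichotomy actually delivers, is the statement for the single-degree space $\operatorname{span}\{t^{d,n}_{k,m}:\, k,m\in\mathcal{I}_n^d\}$. Second, and more substantively: you rule out $\operatorname{span}\{t^{d,n}_{k,m}\}\subseteq\Pi_{N_1+N_2}(SO(d))$ by saying that $t^{d,n}_{0,0}(g)=c\,C_n^{(d-2)/2}(g_{dd})$ ``has polynomial degree exactly $n$ in the entries of $g$.'' That is a statement about the ambient polynomial on $\mathbb{R}^{d^2}$, whereas $\Pi_N(SO(d))$ consists of \emph{restrictions} to the variety $SO(d)$, and restriction can lower degree (for instance $\sum_j g_{1j}^2$ restricts to the constant $1$). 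So you must still exclude that $C_n^{(d-2)/2}(g_{dd})$ agrees on $SO(d)$ with some polynomial of degree at most $N_1+N_2$. This is true but requires an argument, e.g.\ restrict to the one-parameter subgroup of rotations in the $(x_{d-1},x_d)$-plane: there $g_{dd}=\cos\theta$ and every matrix entry is an affine function of $\cos\theta$ and $\sin\theta$, so any element of $\Pi_N(SO(d))$ restricts to a trigonometric polynomial of degree at most $N$ in $\theta$, while $C_n^{(d-2)/2}(\cos\theta)$ has degree exactly $n$. With these two repairs your argument goes through and gives an alternative, essentially elementary proof of the lemma.
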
 

For $d =3$ the above result is well known. Indeed, in this case the matrix functions $t_{\ell, k}^{3, n}$, often called Wigner $D$-functions, satisfy the product property
\begin{equation*}
    t_{\ell, k}^{3, n} \,  t_{\ell', k'}^{3, n'} \in \mathcal{M}_{n+n'}^1(SO(3)).
\end{equation*}
More details in this direction can be found in \cite[Chapter~4]{bib20}. In the case $d\geq 4$, a proof was recently given in \cite[Lemma~2.2]{bib37}.

\section{Polynomial Frames}\label{sec3}
For the construction of polynomial frames for $L^2(\mathbb{S}^{d-1})$ we fix an increasing sequence $(N_j)_{j=0}^\infty \subset \mathbb{N}_0$ as well as a corresponding sequence of positive quadrature rules
\begin{equation}\label{eq35}
		\int_{SO(d)} f_1 f_2 \, \mathrm{d}\mu_d = \sum_{r=1}^{r_j} \mu_{j, r}  f_1(g_{j, r}) f_2(g_{j, r}) \quad \text{if } f_1, f_2 \in \mathcal{M}_{N_j}^1(SO(d)), \qquad j \in \mathbb{N}_0,
\end{equation}
with suitable points $g_{j, r}\in SO(d)$ and weights $\mu_{j, r}>0$. Although the literature that specifically discusses numerical integration on $SO(d)$, $d>3$, is scarce, there are results for more general domains that are applicable. We refer to \cite{bib28, bib71}, in which the existence and computability of such points $g_{j, r}$ and weights $\mu_{j, r}$ are demonstrated. Moreover, in \autoref{subsec4.5} we will discuss how formulas of the form \eqref{eq35} can be obtained from spherical quadrature rules using \hyperref[lemma product property]{Lemma~\ref*{lemma product property}}. These discretization rules are very natural in this context, and they extend the point sets usually used in the zonal setting.

In the following, let $(\Psi^j)_{j=0}^\infty$ be a sequence of polynomials such that $\Psi^j \in \Pi_{N_j}(\mathbb{S}^{d-1})$, i.e.,
\[
\Psi^j = \sum_{n=0}^{N_j}\sum_{k \in \mathcal{I}_n^d} \Psi^j(n,k) Y_k^{d, n}, \quad j \in \mathbb{N}_0. 
\]
The objects of study in this section are families of the form
\begin{equation}\label{ourFrames}
       \mathcal{X}[(\Psi^j)_{j=0}^\infty] = \{ \sqrt{\mu_{j, r}} \,T^d(g_{j, r})\Psi^j , \; r=1, \dots, r_j, \; j \in \mathbb{N}_0 \}, 
\end{equation}
where we will identify $\mathcal{X}[(\Psi^j)_{j=0}^\infty]$ with the (canonically) ordered sequence
\begin{equation*}
    \left( \Psi^{0, 1}, \dots, \Psi^{0, r_0}, \Psi^{1, 1},\dots, \Psi^{1, r_1}, \Psi^{2, 1}, \dots  \right), \qquad \text{where } \Psi^{j, r} = \sqrt{\mu_{j, r}}\,T^d(g_{j, r})\Psi^j.
\end{equation*}
The sequence $(\Psi^j)_{j=0}^\infty$ will be called a frame-generating sequence if the system $ \mathcal{X}[(\Psi^j)_{j=0}^\infty]$ forms a frame for $L^2(\mathbb{S}^{d-1})$, i.e., if there are constants $0<C_1\leq C_2$ such that 
\begin{equation}\label{def frame}
C_1 \|f \|_{L^2(\mathbb{S}^{d-1})}^2  \leq \sum_{j=0}^\infty \sum_{r=1}^{r_j} \mu_{j, r}\, \lvert \langle f, T^d(g_{j, r})\Psi^j\rangle_{\mathbb{S}^{d-1}}\rvert^2 \leq  C_2 \|f \|_{L^2(\mathbb{S}^{d-1})}^2, 
\end{equation}
holds for all $f \in L^2(\mathbb{S}^{d-1})$. For further details on frame theory, we refer to \cite{christensen2016introduction}. 

Clearly, systems of the form \eqref{ourFrames} are constructed in a similar way to classical wavelet frames, with $T^d$ taking on the role of the translation operator. Hence, in reference to the wavelet setting, we will often say that $\Psi^j$ belongs to the scale $j$.

If $\kappa_j \colon \{0, 1, \dots, N_j \} \rightarrow \mathbb{C}$, $j \in \mathbb{N}_0$, are suitable frequency filters, choosing
\begin{equation*}
    \Psi^j = \sum_{n=0}^{N_j}\kappa_{j}(n)\sum_{k \in \mathcal{I}_n^d} \overline{Y_k^{d, n}(e^d)} Y_k^{d, n}, \quad j \in \mathbb{N}_0,
\end{equation*}
yields the well-established isotropic setting studied in \cite{bib11, bib3, bib15, bib16, bib29, bib30, bib14, bib46, bib48, bib49, bib25, bib78}. Here, each initial function $\Psi^j$ is invariant under rotations that keep the North Pole $e^d$ fixed and, as a consequence, it is not hard to see that the integration rules \eqref{eq35} can be replaced by suitable spherical quadrature formulas. For more details, we refer to the discussion in \autoref{subsec4.5}.

The following proposition gives a characterization of all frames $\mathcal{X}[(\Psi^j)_{j=0}^\infty]$ in terms of an admissibility condition on the Fourier coefficients $\Psi^j(n, k) = \langle \Psi^j, Y_k^{d, n}\rangle_{\mathbb{S}^{d-1}}$.
\begin{proposition}\label{prop2}
Let $0<C_1 \leq C_2$ and $(\Psi^j)_{j=0}^\infty$ be a sequence of polynomials such that $\Psi^j \in \Pi_{N_j}(\mathbb{S}^{d-1})$ for each $j \in \mathbb{N}_0$. Then $ \mathcal{X}[(\Psi^j)_{j=0}^\infty]$ is a frame for $L^2(\mathbb{S}^{d-1})$ with lower and upper frame bounds $C_1$ and $C_2$, respectively, if and only if 
\begin{equation}\label{eq32}
C_1 \leq (\dim \mathcal{H}_n^d)^{-1}\sum_{j=0}^\infty \sum_{k \in \mathcal{I}_n^d} \lvert \Psi^j(n, k)\rvert^2 \leq C_2, \qquad \text{for each } n \in \mathbb{N}_0. 
\end{equation}

\end{proposition}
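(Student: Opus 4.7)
The plan is to test the middle expression in \eqref{def frame} against an arbitrary $f\in L^2(\mathbb{S}^{d-1})$ and to reduce it to a weighted Parseval sum in the Fourier coefficients $f(n,k)$. The key manipulation is to use the quadrature rule \eqref{eq35} to convert the inner sum $\sum_{r}\mu_{j,r}|\cdot|^{2}$, for each fixed scale $j$, into an integral $\int_{SO(d)}|\cdot|^{2}\,\mathrm{d}\mu_{d}$, and then to invoke the orthogonality of the matrix functions $t_{k,m}^{d,n}$.

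First I would use \eqref{eq30} to expand
\begin{equation*}
\langle f, T^{d}(g)\Psi^{j}\rangle_{\mathbb{S}^{d-1}}=\sum_{n=0}^{N_{j}}\sum_{m,k\in\mathcal{I}_{n}^{d}} f(n,k)\,\overline{\Psi^{j}(n,m)}\,\overline{t_{k,m}^{d,n}(g)},
\end{equation*}
so that, as a function of $g$, this expression and its complex conjugate both lie in $\mathcal{M}_{N_{j}}^{1}(SO(d))$. The quadrature \eqref{eq35} is therefore applicable to $|\langle f, T^{d}(g)\Psi^{j}\rangle_{\mathbb{S}^{d-1}}|^{2}$, yielding
\begin{equation*}
\sum_{r=1}^{r_{j}}\mu_{j,r}\,|\langle f,T^{d}(g_{j,r})\Psi^{j}\rangle_{\mathbb{S}^{d-1}}|^{2} = \int_{SO(d)}|\langle f,T^{d}(g)\Psi^{j}\rangle_{\mathbb{S}^{d-1}}|^{2}\,\mathrm{d}\mu_{d}(g).
\end{equation*}
Using the orthogonality $\int_{SO(d)}t_{k,m}^{d,n}\,\overline{t_{k',m'}^{d,n'}}\,\mathrm{d}\mu_{d} = \delta_{n,n'}\delta_{k,k'}\delta_{m,m'}/\dim\mathcal{H}_{n}^{d}$, the integral collapses to
\begin{equation*}
\sum_{n=0}^{N_{j}}\frac{1}{\dim\mathcal{H}_{n}^{d}}\biggl(\sum_{m\in\mathcal{I}_{n}^{d}}|\Psi^{j}(n,m)|^{2}\biggr)\biggl(\sum_{k\in\mathcal{I}_{n}^{d}}|f(n,k)|^{2}\biggr).
\end{equation*}
Summing over $j$ (monotone convergence, and using $\Psi^{j}(n,m)=0$ for $n>N_{j}$) then gives
\begin{equation*}
\sum_{j=0}^{\infty}\sum_{r=1}^{r_{j}}\mu_{j,r}\,|\langle f, T^{d}(g_{j,r})\Psi^{j}\rangle_{\mathbb{S}^{d-1}}|^{2} = \sum_{n=0}^{\infty} a_{n}\sum_{k\in\mathcal{I}_{n}^{d}}|f(n,k)|^{2},
\end{equation*}
where $a_{n} := (\dim\mathcal{H}_{n}^{d})^{-1}\sum_{j=0}^{\infty}\sum_{m\in\mathcal{I}_{n}^{d}}|\Psi^{j}(n,m)|^{2}$.

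Combining with Parseval, $\|f\|_{L^{2}(\mathbb{S}^{d-1})}^{2}=\sum_{n,k}|f(n,k)|^{2}$, the inequality \eqref{def frame} rewrites as $C_{1}\sum_{n,k}|f(n,k)|^{2} \le \sum_{n} a_{n}\sum_{k}|f(n,k)|^{2}\le C_{2}\sum_{n,k}|f(n,k)|^{2}$ for every $f\in L^{2}(\mathbb{S}^{d-1})$, and this is manifestly equivalent to $C_{1}\le a_{n}\le C_{2}$ for every $n\in\mathbb{N}_{0}$: the ``if'' direction is termwise, and the ``only if'' direction follows by testing with $f=Y_{k}^{d,n_{0}}$ to isolate each $a_{n_{0}}$. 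The only technical point requiring care is the claim that $\mathcal{M}_{N_{j}}^{1}(SO(d))$ is closed under complex conjugation; without this, the quadrature cannot be applied directly to $|\langle f, T^{d}(\cdot)\Psi^{j}\rangle_{\mathbb{S}^{d-1}}|^{2}$. This closure is a standard consequence of the self-conjugacy of the irreducible $SO(d)$-representations on the spaces $\mathcal{H}_{n}^{d}$, and can alternatively be verified directly from the hyperspherical basis of \hyperref[appendix A]{Appendix A} by inspecting how complex conjugation acts on the basis elements.
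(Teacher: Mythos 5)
Your proof is correct and follows essentially the same route as the paper's: convert the discrete sum at each scale into an integral over $SO(d)$ via the quadrature rule \eqref{eq35}, use orthogonality of the matrix functions $t_{k,m}^{d,n}$ to reduce everything to a weighted Parseval sum, and test with individual spherical harmonics $Y_\ell^{d,n}$ for the converse. The conjugation-closure of $\mathcal{M}_{N_j}^1(SO(d))$ that you flag is indeed the implicitly used justification in the paper as well, noted there in \autoref{sec2} via the observation that $\overline{t_{m,k}^{d,n}}$ is a linear combination of the $t_{r,s}^{d,n}$.
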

\begin{proof}
We need to show that \eqref{eq32} is equivalent to \eqref{def frame}. 
Expanding $T^d(g)\Psi^j$ in terms of the matrix functions defined by \eqref{eq28}, we have
\begin{equation}\label{eq34}
T^d(g)\Psi^j = \sum_{n=0}^{N_j} \sum_{\ell \in \mathcal{I}_n^d} \sum_{k \in  \mathcal{I}_n^d}  \Psi^j(n, k)\, t_{\ell, k}^{d, n}(g) \, Y_\ell^{d, n}.
\end{equation}
In particular, the functions $g \mapsto T^d(g)\Psi^j(\eta)$ and $g \mapsto \langle f, T^d(g)\Psi^j \rangle_{\mathbb{S}^{d-1}} $, where $\eta \in \mathbb{S}^{d-1}$ and $f \in L^1(\mathbb{S}^{d-1})$, are contained in the polynomial subspace $\mathcal{M}_{N_j}^1(SO(d))$. Consequently, it follows from \eqref{eq35} that 
\begin{align}\label{eq:d-c}
\sum_{j=0}^\infty \sum_{r=1}^{r_j} \mu_{j, r} \, \lvert \langle f, T^d(g_{j, r})\Psi^j\rangle_{\mathbb{S}^{d-1}} \rvert^2 = \sum_{j=0}^\infty \int_{SO(d)} \lvert \langle f, T^d(g)\Psi^j\rangle_{\mathbb{S}^{d-1}} \rvert^2 \, \mathrm{d}\mu_d(g). 
\end{align}
Now, similar to \eqref{eq34}, we have
\begin{equation*}
\langle f, T^d(g)\Psi^j\rangle_{\mathbb{S}^{d-1}}  = \sum_{n=0}^\infty \sum_{k \in \mathcal{I}_n^d} \sum_{\ell \in \mathcal{I}_n^d} \overline{\Psi^j(n, k)} \, \langle f, Y_\ell^{d, n} \rangle_{\mathbb{S}^{d-1}} \, \overline{t_{\ell, k}^{d, n}(g)}.
\end{equation*}
Thus, by Parseval's identity,
\begin{align}
&\sum_{j=0}^\infty \int_{SO(d)} \lvert \langle f, T^d(g)\Psi^j\rangle_{\mathbb{S}^{d-1}} \rvert^2 \, \mathrm{d}\mu_d(g)  = \nonumber\\
& \qquad \qquad  \sum_{j=0}^\infty  \sum_{n=0}^\infty \sum_{k \in \mathcal{I}_n^d} \sum_{\ell \in \mathcal{I}_n^d} (\dim \mathcal{H}_n^d)^{-1}\lvert\Psi^j(n, k) \rvert^2 \, \lvert \langle f, Y_\ell^{d, n} \rangle_{\mathbb{S}^{d-1}} \rvert^2. \label{eq:c-f}
\end{align}
Hence, if \eqref{eq32} holds, the frame property follows immediately. On the other hand, if \eqref{def frame} is satisfied, then by substituting $f=Y_{\ell}^{d,n}$ into \eqref{eq:d-c} and applying the equality \eqref{eq:c-f}, we obtain
\begin{equation*}
C_1 \leq (\dim \mathcal{H}_n^d)^{-1}\sum_{j=0}^\infty \sum_{k \in \mathcal{I}_n^d} \lvert \Psi^j(n, k)\rvert^2 \leq C_2.
\end{equation*}
\end{proof}

The following result characterizes all possible dual frames within our construction.
\begin{proposition}\label{prop dual pairs}
Let $(\Psi^j)_{j=0}^\infty$ and $(\tilde{\Psi}^j)_{j=0}^\infty$ be two frame-generating sequences with $\Psi^j$, $\tilde{\Psi}^j\in \Pi_{N_j}(\mathbb{S}^{d-1})$ for each $j \in \mathbb{N}_0$. Then $\mathcal{X}[(\Psi^j)_{j=0}^\infty]$ and $\mathcal{X}[(\tilde{\Psi}^j)_{j=0}^\infty]$ are dual frames if and only if
\begin{equation}\label{eq40}
(\dim \mathcal{H}_n^d)^{-1} \sum_{j=0}^\infty \sum_{k \in \mathcal{I}_n^d} \overline{\Psi^j(n, k)} \, \tilde{\Psi}^j(n, k) = 1, \qquad \text{for all } n \in \mathbb{N}_0.
\end{equation}
\end{proposition}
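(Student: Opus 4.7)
The plan is to reduce the dual-frame identity
\[
\langle f_1, f_2\rangle_{\mathbb{S}^{d-1}} = \sum_{j=0}^\infty \sum_{r=1}^{r_j} \mu_{j, r}\, \langle f_1, T^d(g_{j, r})\Psi^j\rangle_{\mathbb{S}^{d-1}}\, \langle T^d(g_{j, r})\tilde\Psi^j, f_2\rangle_{\mathbb{S}^{d-1}},
\]
which characterizes duality of $\mathcal{X}[(\Psi^j)_{j=0}^\infty]$ and $\mathcal{X}[(\tilde\Psi^j)_{j=0}^\infty]$ by polarization of $f_1 = \sum_{j, r}\mu_{j, r}\langle f_1, T^d(g_{j, r})\Psi^j\rangle T^d(g_{j, r})\tilde\Psi^j$, to an explicit identity on Fourier coefficients, and then conclude by testing against an orthonormal basis of spherical harmonics. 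The route closely mirrors the proof of Proposition~\ref{prop2}.

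First, since $\Psi^j, \tilde\Psi^j \in \Pi_{N_j}(\mathbb{S}^{d-1})$, the matrix expansion \eqref{eq34} shows that both $g \mapsto \langle f_1, T^d(g)\Psi^j\rangle_{\mathbb{S}^{d-1}}$ and $g \mapsto \overline{\langle f_2, T^d(g)\tilde\Psi^j\rangle_{\mathbb{S}^{d-1}}}$ lie in $\mathcal{M}_{N_j}^1(SO(d))$. Thus the quadrature rule \eqref{eq35} is exact on their product, and the sum over $r$ collapses to
\[
\sum_{j=0}^\infty \int_{SO(d)} \langle f_1, T^d(g)\Psi^j\rangle_{\mathbb{S}^{d-1}}\, \langle T^d(g)\tilde\Psi^j, f_2\rangle_{\mathbb{S}^{d-1}}\, \mathrm{d}\mu_d(g).
\]

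Next, I would substitute \eqref{eq34} into both factors, multiply out, and invoke the orthogonality relation $\int_{SO(d)} \overline{t_{\ell, k}^{d, n}}\, t_{\ell', k'}^{d, n'}\, \mathrm{d}\mu_d = (\dim \mathcal{H}_n^d)^{-1}\delta_{n, n'}\delta_{\ell, \ell'}\delta_{k, k'}$. The integral reduces to
\[
\sum_{n=0}^\infty \sum_{\ell \in \mathcal{I}_n^d} f_1(n, \ell)\overline{f_2(n, \ell)}\cdot (\dim \mathcal{H}_n^d)^{-1}\sum_{j=0}^\infty \sum_{k \in \mathcal{I}_n^d} \overline{\Psi^j(n, k)}\,\tilde\Psi^j(n, k).
\]
Comparing with Parseval's identity $\langle f_1, f_2\rangle_{\mathbb{S}^{d-1}} = \sum_{n, \ell} f_1(n, \ell)\overline{f_2(n, \ell)}$ yields the ``if'' direction directly from \eqref{eq40}; conversely, substituting $f_1 = f_2 = Y_\ell^{d, n}$ forces the inner factor to equal $1$ for every $n$, which is precisely \eqref{eq40}.

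The principal subtlety is not a deep obstacle but bookkeeping: I need to justify interchanging the summation over $j$ with the integral over $SO(d)$ and with the Fourier sum in $(n, \ell)$. This is secured by the fact that both $(\Psi^j)_{j=0}^\infty$ and $(\tilde\Psi^j)_{j=0}^\infty$ are Bessel sequences with bounds provided by \eqref{eq32}, so Cauchy--Schwarz furnishes the absolute convergence needed to apply Fubini and to rearrange the resulting series in $n$, $\ell$, $k$, and $j$.
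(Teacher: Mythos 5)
Your proposal is correct and follows essentially the same route as the paper's proof: the quadrature rule \eqref{eq35} converts the discrete frame sums into integrals over $SO(d)$, orthogonality of the matrix coefficients $t_{\ell,k}^{d,n}$ diagonalizes the resulting expression, and testing against the spherical harmonics $Y_\ell^{d,n}$ yields \eqref{eq40}. The only immaterial difference is that you phrase duality through the weak sesquilinear identity rather than the reconstruction formula \eqref{eq39} used in the paper; the convergence bookkeeping via the Bessel bounds is handled the same way in both.
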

\begin{proof}
We need to show that condition \eqref{eq40} is equivalent to
\begin{equation}\label{eq39}
f = \sum_{j=0}^\infty \sum_{r=1}^{r_j}\mu_{j, r}\, \langle f, T^d(g_{j, r})\Psi^j \rangle_{\mathbb{S}^{d-1}} \, T^d(g_{j, r})\tilde{\Psi}^j, \qquad \text{for each }f \in L^2(\mathbb{S}^{d-1}).
\end{equation}
Assume that \eqref{eq39} holds true for all $f \in L^2(\mathbb{S}^{d-1})$. As in the proof of \hyperref[prop2]{Proposition~\ref*{prop2}}, we easily derive
\begin{align}\label{eq:23,prop3.2}
&\sum_{j=0}^\infty \sum_{r=1}^{r_j}\mu_{j, r}\, \langle f, T^d(g_{j, r})\Psi^j \rangle_{\mathbb{S}^{d-1}} \, T^d(g_{j, r})\tilde{\Psi}^j(\eta) \nonumber \\
&\qquad \qquad = \sum_{j=0}^\infty  \int_{SO(d)}\langle f, T^d(g)\Psi^j \rangle_{\mathbb{S}^{d-1}} \, T^d(g)\tilde{\Psi}^j(\eta)\, \mathrm{d}\mu_d(g) \nonumber \\
&\qquad \qquad   = \sum_{j=0}^\infty  \sum_{n=0}^\infty \sum_{k \in \mathcal{I}_n^d}\sum_{\ell \in \mathcal{I}_n^d} \langle f, Y_\ell^{d, n}\rangle_{\mathbb{S}^{d-1}} Y_\ell^{d, n}(\eta) \, (\dim \mathcal{H}_n^d)^{-1}  \overline{\Psi^j(n, k)} \, \tilde{\Psi}^j(n, k).
\end{align}
Now, setting $f=Y_\ell^{d, n}$, we immediately get
\begin{equation*}
(\dim \mathcal{H}_n^d)^{-1} \sum_{j=0}^\infty \sum_{k \in \mathcal{I}_n^d} \overline{\Psi^j(n, k)} \, \tilde{\Psi}^j(n, k) = 1.
\end{equation*}
 In the other direction, since $(\Psi^j)_{j=0}^\infty$ and $(\tilde{\Psi}^j)_{j=0}^\infty$ are frame-generating sequences, the infinite series in \eqref{eq39} is convergent for every $f \in L^2(\mathbb{S}^{d-1})$ by Hölder's inequality. Then taking the inner product of both sides of  \eqref{eq:23,prop3.2} with $Y_{\ell'}^{d,n'}$ yields
 \begin{align*}
 &\sum_{j=0}^\infty \sum_{r=1}^{r_j}\mu_{j, r}\, \langle f, T^d(g_{j, r})\Psi^j \rangle_{\mathbb{S}^{d-1}} \, 
 \langle T^d(g_{j, r})\tilde{\Psi}^j, Y_{\ell'}^{d,n'}\rangle_{\mathbb{S}^{d-1}} \\
 &= \sum_{j=0}^\infty  \sum_{n=0}^\infty \sum_{k \in \mathcal{I}_n^d}\sum_{\ell \in \mathcal{I}_n^d} \langle f, Y_\ell^{d, n}\rangle_{\mathbb{S}^{d-1}}
 \langle Y_\ell^{d, n}, Y_{\ell'}^{d, n'}\rangle_{\mathbb{S}^{d-1}} \, (\dim \mathcal{H}_n^d)^{-1}  \overline{\Psi^j(n, k)} \, \tilde{\Psi}^j(n, k) \\
 &= \sum_{j=0}^\infty   \sum_{k \in \mathcal{I}_n^d}\langle f,   Y_{\ell'}^{d, n'}\rangle_{\mathbb{S}^{d-1}} \, (\dim \mathcal{H}_{n'}^d)^{-1}  \overline{\Psi^j(n', k)} \, \tilde{\Psi}^j(n', k)\\
 &= \langle f,   Y_{\ell'}^{d, n'}\rangle_{\mathbb{S}^{d-1}}
 \end{align*}
 where the last equality follows from \eqref{eq40}. Since this equality holds for every $n'$ and $\ell'$, it follows that 
 \[
f = \sum_{j=0}^\infty \sum_{r=1}^{r_j}\mu_{j, r}\, \langle f, T^d(g_{j, r})\Psi^j \rangle_{\mathbb{S}^{d-1}} \, T^d(g_{j, r})\tilde{\Psi}^j. 
 \]
This completes the proof.
\end{proof}

Let $(\Psi^j)_{j=0}^\infty$, $(\tilde{\Psi}^j)_{j=0}^\infty$ be two frame-generating sequences with $\Psi^j, \tilde{\Psi}^j \in \Pi_{N_j}(\mathbb{S}^{d-1})$ that form a dual pair. Then
\begin{equation*}
  f = \sum_{j=0}^\infty \sum_{r=1}^{r_j}\mu_{j, r}\, \langle f, T^d(g_{j, r})\Psi^j \rangle_{\mathbb{S}^{d-1}} \, T^d(g_{j, r})\tilde{\Psi}^j, \quad \text{for each } f \in L^2(\mathbb{S}^{d-1}),
\end{equation*}
and the series on the right-hand side converges unconditionally w.r.t.\ $\| \cdot \|_{L^2(\mathbb{S}^{d-1})}$. Now, we consider the sequence of approximation operators
\begin{equation*}
    \Lambda_J \colon L^1(\mathbb{S}^{d-1}) \rightarrow \Pi_{N_J}(\mathbb{S}^{d-1}), \quad \Lambda_J f = \sum_{j=0}^J \sum_{r=1}^{r_j}\mu_{j, r}\, \langle f, T^d(g_{j, r})\Psi^j \rangle_{\mathbb{S}^{d-1}} \, T^d(g_{j, r})\tilde{\Psi}^j.
\end{equation*}
As in \eqref{eq:23,prop3.2}, a straightforward calculation shows that
\begin{equation}\label{approximation operators}
   \Lambda_J f = \sum_{n=0}^{N_J} \sum_{\ell \in \mathcal{I}_n^d}\sigma_J(n) \langle f, Y_\ell^{d, n}\rangle_{\mathbb{S}^{d-1}} Y_\ell^{d, n},
\end{equation}
where
\begin{equation*}
    \sigma_J(n) = (\dim \mathcal{H}_n^d)^{-1}\sum_{j=0}^J \sum_{k \in \mathcal{I}_n^d}  \overline{\Psi^j(n, k)} \, \tilde{\Psi}^j(n, k).
\end{equation*}
Operators of the form \eqref{approximation operators} are well investigated in the literature (see, e.g., \cite{bib3} for a detailed review). In particular, if
\begin{equation}\label{sigma_J eq}
    \sigma_J(n) = 1, \quad \text{for } 0 \leq n \leq N_{j-1}
\end{equation}
and if the operators $ \Lambda_J$, $J \in \mathbb{N}_0$ are uniformly bounded in $L^p(\mathbb{S}^{d-1})$, $1 \leq p < \infty$, i.e.,
\begin{equation}\label{uniformly bounded L^p}
    \| \Lambda_J f \|_{L^p(\mathbb{S}^{d-1})} \leq c_p \| f\|_{L^p(\mathbb{S}^{d-1})}, \quad \text{for each } f \in L^p(\mathbb{S}^{d-1}),
\end{equation}
then a standard argument yields
\begin{equation*}
    \| f- \Lambda_Jf \|_{L^p(\mathbb{S}^{d-1})} \leq c_p E_{N_{J-1}}(f)_p, \quad \text{for each } f \in L^p(\mathbb{S}^{d-1}).
\end{equation*}
Here,
\begin{equation*}
    E_m(f)_p = \inf_{P \in \Pi_m(\mathbb{S}^{d-1})} \| f-P \|_{L^p(\mathbb{S}^{d-1})}
\end{equation*}
denotes the error of the best approximation of $f \in L^p(\mathbb{S}^{d-1})$ in $\Pi_{m}(\mathbb{S}^{d-1})$ w.r.t.\ $\| \cdot \|_{L^p(\mathbb{S}^{d-1})}$. Analogously, if \eqref{sigma_J eq} holds and if
\begin{equation}\label{uniformly bounded C}
    \| \Lambda_J f \|_{C(\mathbb{S}^{d-1})} \leq c \| f\|_{C(\mathbb{S}^{d-1})}, \quad \text{for each } f \in C(\mathbb{S}^{d-1}),
\end{equation}
then
\begin{equation*}
    \| f- \Lambda_Jf \|_{C(\mathbb{S}^{d-1})} \leq c \inf_{P \in \Pi_{N_{J-1}}(\mathbb{S}^{d-1})} \| f-P \|_{C(\mathbb{S}^{d-1})}, \quad \text{for each } f \in C(\mathbb{S}^{d-1}).
\end{equation*}

A standard construction that satisfies \eqref{sigma_J eq}, \eqref{uniformly bounded L^p} and \eqref{uniformly bounded C} can be given as follows: Let $\phi \in C^\infty([0, \infty))$ be a real-valued non-increasing function such that $\phi(t)=1$ for $t\in [0, 1/2]$ and $\phi(t)=0$ for $t\geq 1$. A corresponding window function with support in $[1/2, 2]$ is defined by
\begin{equation*}
\kappa(t)=\sqrt{\phi^2(t/2) - \phi^2(t)}, \qquad 0 \leq t < \infty.
\end{equation*}
If $\sigma_0(n) = \delta_{n,0}$ and
\begin{equation*}
    (\dim \mathcal{H}_n^d)^{-1} \sum_{k \in \mathcal{I}_n^d}  \overline{\Psi^j(n, k)} \, \tilde{\Psi}^j(n, k) = \kappa^2 \mleft( \frac{n}{2^{j-1}} \mright), \quad n \in \mathbb{N}, \; j \in \mathbb{N},
\end{equation*}
then it follows that
\begin{equation*}
    \Lambda_J f = \sum_{n=0}^\infty \sum_{k \in \mathcal{I}_n^d}   \phi^2\!\left( \frac{n}{2^J}\right)  \langle f, Y_k^{d, n} \rangle Y_k^{d, n}.
\end{equation*}
Now, it is obvious that \eqref{sigma_J eq} holds for $N_0=0$, $N_j = 2^j$, $j \in \mathbb{N}$. Furthermore, it is well known that, in this setting, \eqref{uniformly bounded L^p} and \eqref{uniformly bounded C} are satisfied. For a proof of the latter, we refer to \cite[Theorem~2.6.3]{bib3}.

The following result shows that the canonical dual frame not only shares the same structural form as the original system, but that its generating sequence is obtained by a simple rescaling of the Fourier coefficients of the initial sequence.

\begin{proposition}\label{prop:canonical_dual}
Let $(\Psi^j)_{j=0}^\infty$ be a frame-generating sequence with $\Psi^j \in \Pi_{N_j}(\mathbb{S}^{d-1})$ for each $j \in \mathbb{N}_0$. Define $(\tilde{\Psi}^j)_{j=0}^\infty \subset \Pi(\mathbb{S}^{d-1})$ by
\begin{equation*}\label{dual sequence}
\tilde{\Psi}^j(n, k) = \sigma_n^{-1} \Psi^j(n, k), \quad\text{ where} \quad \sigma_n=(\dim \mathcal{H}_n^d)^{-1}\sum_{j=0}^\infty \sum_{k \in \mathcal{I}_n^d} \lvert \Psi^j(n, k)\rvert^2. 
\end{equation*}
Then the system $\mathcal{X}[(\tilde{\Psi}^j)_{j=0}^\infty]$ is the canonical dual frame of $\mathcal{X}[(\Psi^j)_{j=0}^\infty]$.
\end{proposition}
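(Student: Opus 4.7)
The plan is to compute the frame operator $S$ of $\mathcal{X}[(\Psi^j)_{j=0}^\infty]$ explicitly and then recognize $\mathcal{X}[(\tilde{\Psi}^j)_{j=0}^\infty]$ as the image of the original frame under $S^{-1}$. Specializing the calculation \eqref{eq:23,prop3.2} in the proof of \hyperref[prop dual pairs]{Proposition~\ref*{prop dual pairs}} to the case $\tilde{\Psi}^j=\Psi^j$ yields
\begin{equation*}
Sf \;=\; \sum_{j=0}^\infty \sum_{r=1}^{r_j}\mu_{j,r}\,\langle f, T^d(g_{j,r})\Psi^j\rangle_{\mathbb{S}^{d-1}}\, T^d(g_{j,r})\Psi^j \;=\; \sum_{n=0}^\infty \sigma_n \sum_{\ell \in \mathcal{I}_n^d}\langle f, Y_\ell^{d,n}\rangle_{\mathbb{S}^{d-1}}\, Y_\ell^{d,n},
\end{equation*}
so $S$ is a Fourier multiplier acting on $\mathcal{H}_n^d$ as multiplication by $\sigma_n$. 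Since $(\Psi^j)_{j=0}^\infty$ is frame-generating, \hyperref[prop2]{Proposition~\ref*{prop2}} guarantees $0<C_1\leq \sigma_n\leq C_2$ for every $n$, so $S^{-1}$ is a bounded operator on $L^2(\mathbb{S}^{d-1})$ acting on $\mathcal{H}_n^d$ as multiplication by $\sigma_n^{-1}$.

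Next, I would use the fact that the representation $T^d$ leaves each eigenspace $\mathcal{H}_n^d$ invariant, which is exactly the content of \eqref{eq30}. Consequently every $T^d(g)$ commutes with $S$ and with $S^{-1}$. Applying $S^{-1}$ termwise to the spherical-harmonic expansion of $\Psi^j$ gives
\begin{equation*}
S^{-1}\Psi^j \;=\; \sum_{n=0}^{N_j}\sum_{k \in \mathcal{I}_n^d} \sigma_n^{-1}\Psi^j(n,k)\, Y_k^{d,n} \;=\; \tilde{\Psi}^j,
\end{equation*}
which, as a byproduct, confirms that $\tilde{\Psi}^j\in \Pi_{N_j}(\mathbb{S}^{d-1})$.

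Combining the commutation relation with the identity $S^{-1}\Psi^j=\tilde{\Psi}^j$, I obtain
\begin{equation*}
S^{-1}\!\bigl(\sqrt{\mu_{j,r}}\,T^d(g_{j,r})\Psi^j\bigr) \;=\; \sqrt{\mu_{j,r}}\,T^d(g_{j,r})\,S^{-1}\Psi^j \;=\; \sqrt{\mu_{j,r}}\,T^d(g_{j,r})\tilde{\Psi}^j,
\end{equation*}
which is the general element of $\mathcal{X}[(\tilde{\Psi}^j)_{j=0}^\infty]$. By the definition of the canonical dual frame as the image of the original system under $S^{-1}$, this proves the claim.

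The argument is essentially a direct verification and I do not anticipate a serious obstacle; the diagonalization of $S$ is already encoded in the calculation used for \hyperref[prop dual pairs]{Proposition~\ref*{prop dual pairs}}. The single structural point that makes the canonical dual inherit the rotated-translate form of the original frame — rather than producing an unstructured family — is that $S$, being a Fourier multiplier with respect to the orthogonal decomposition $L^2(\mathbb{S}^{d-1})=\bigoplus_n \mathcal{H}_n^d$, commutes with every rotation $T^d(g_{j,r})$.
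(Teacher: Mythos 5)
Your proposal is correct and follows essentially the same route as the paper: the paper likewise diagonalizes the frame operator by computing $SY_\ell^{d,n}=\sigma_n Y_\ell^{d,n}$ (via the quadrature rule, the expansion \eqref{eq34}, and orthogonality of the matrix functions --- the same computation you invoke from \eqref{eq:23,prop3.2}), then uses \eqref{eq30} to conclude that $T^d(g)$ commutes with $S^{-1}$ and applies $S^{-1}$ to the frame elements. No gaps; the only cosmetic difference is that you cite the earlier calculation rather than redoing it.
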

\begin{proof}
Let $S: L^{2}({\mathbb{ S }}^{d-1}) \to L^{2}({\mathbb{ S }}^{d-1})$ denote the frame operator corresponding to the frame $\mathcal{X}[(\Psi^j)_{j=0}^\infty]$. For any $n,\ell$, it holds that
\begin{align}\label{eq:S}
S Y_{\ell}^{d,n} &= 
\sum_{j=0}^\infty  \sum_{r=1}^{r_j}  \mu_{j, r} \langle Y_\ell^{d,n} , T^d(g_{j, r})\Psi^j \rangle_{\mathbb{S}^{d-1}}\,  T^d(g_{j, r})\Psi^j \nonumber\\
&= \sum_{j=0}^\infty \int_{SO(d) } \langle Y_{\ell}^{d,n}, T^d(g) \Psi^{j}\rangle_{\mathbb{S}^{d-1}} T^d(g) \Psi^{j} \, \mathrm{d}\mu_d(g).
\end{align}
Via \eqref{eq34}, we have 
\begin{equation}\label{eq:inner-prod}
\langle Y_{\ell}^{d, n}, T^d(g) \Psi^{j}\rangle_{\mathbb{S}^{d-1}}  
=\sum_{k \in \mathcal{I}_n^d} \overline{\Psi^{j}(n, k ) \, t_{\ell, k}^{d,n}(g)}.
\end{equation}
Substituting \eqref{eq:inner-prod} in \eqref{eq:S} and expanding $T^d(g)\Psi^j$ using the  matrix functions, we obtain 
\begin{align*}
S Y_{\ell}^{d, n} & =\sum_{j=0}^\infty   \int_{SO(d) } \sum_{k \in \mathcal{I}_n^d} \overline{\Psi^{j}(n, k) t_{\ell, k}^{d, n}(g) }
\bigg(\sum_{n'=0}^\infty \sum_{\ell', k' \in \mathcal{I}_{n'}^d}   \Psi^j(n', k')\, t_{\ell', k'}^{d, n'}(g) \, Y_{\ell'}^{d, n'}\bigg) \mathrm{d}\mu_d(g) \\
& = (\dim \mathcal{H}_n^{d}	)^{-1} \sum_{j=0}^\infty  \sum_{k \in \mathcal{I}_n^d}  \lvert\Psi^{j}(n, k )\rvert^{2} Y_{\ell}^{d,n}.
\end{align*}
Thus we have
\begin{equation*}
S Y_{\ell}^{d,n}=\sigma_{n} Y_{\ell}^{d,n}, \quad \text { where } \quad \sigma_{n}= (\dim \mathcal{H}_n^{d} )^{-1}	\sum_{j=0}^\infty  \sum_{k \in \mathcal{I}_n^d}  \lvert\Psi^{j}\left(n, k\right)\rvert^{2}.
\end{equation*}
Since $S$ is diagonal with respect to the basis $Y_{\ell}^{d,n}$, equation \eqref{eq30} implies that  $T^d(g)$ commutes with the frame operator $S$, and therefore also with $S^{-1}$, for all $g\in SO(d)$. Thus,
\begin{align*}
\sqrt{\mu_{j, r}}\, S^{-1} T^d(g_{j,r}) \Psi^{j} &=\sqrt{\mu_{j, r}}\,T^d(g_{j, r}) S^{-1} \Psi^{j}
= \sqrt{\mu_{j, r}}\,T^d(g_{j, r})\tilde{ \Psi}^{j}
\end{align*}
and the canonical dual frame associated with $\mathcal{X}[(\Psi^j)_{j=0}^\infty]$ is given by $\mathcal{X}[(\tilde{\Psi}^j)_{j=0}^\infty]$.
\end{proof}

\section{Properties}\label{sec4}
The class of polynomial frames introduced in \autoref{sec3} contains a variety of systems with different structures and characteristics. In this section, we will highlight several particularly relevant properties that a frame $\mathcal{X}[(\Psi^j)_{j=0}^\infty]$ can exhibit. Moreover, we demonstrate how these properties can be enforced during the construction process by setting suitable conditions on the Fourier coefficients $\Psi^j(n, k)$. To conclude this section, we provide a detailed discussion on various quadrature rules that can be used to build frames of the form $\mathcal{X}[(\Psi^j)_{j=0}^\infty]$ as defined in \eqref{ourFrames}.

\subsection{Steerability}\label{subsec4.1}
In the context of wavelets on the two-dimensional sphere $\mathbb{S}^2$, the notion of steerability was introduced in \cite{bib64} and later utilized in the construction of directional polynomial  wavelets \cite{bib9, bib10, bib21}. More recently, the concept has also been applied to higher-dimensional settings \cite{bib37}. A general definition, valid for all spheres $\mathbb{S}^{d-1}$, $d\geq 3$, can be given as follows.
\begin{definition}\label{steerability}
Let $K\in \mathbb{N}_0$. A sequence $(\Psi^j)_{j=0}^\infty \subset \Pi(\mathbb{S}^{d-1})$ is called $K$-steerable w.r.t.\ $SO(d-1)$, or simply steerable, if there exist $h_1, \dots, h_M \in SO(d-1)$ and $v_1, \dots, v_M \in \mathcal{M}_K^1(SO(d-1))$ such that
\begin{equation*}
T^d(h)\Psi^j = \sum_{p=1}^M v_p(h) \, T^d(h_p)\Psi^j, \qquad \text{for each }  h \in SO(d-1), \; j \in \mathbb{N}_0.
\end{equation*}
A system $\mathcal{X}[(\Psi^j)_{j=0}^\infty]$ generated by a $K$-steerable sequence $(\Psi^j)_{j=0}^\infty$ will also be called $K$-steerable (or simply steerable).
\end{definition}

Steerability is a property of particular relevance in the context of localized frames. 
Let  $\mathcal{X}[(\Psi^j)_{j=0}^\infty]$ be  a localized frame generated by a sequence $(\Psi^j)_{j=0}^\infty \subset \Pi(\mathbb{S}^{d-1})$, where each $\Psi^j$ is concentrated at the North Pole $e^d$. In this case, the functions $T^d(h)\Psi^j$, $h \in SO(d-1)$, remain concentrated at the North Pole; we refer to them as orientations of $\Psi^j$. By definition, if the sequence of polynomials $\Psi^j$ is steerable, then every orientation $T^d(h)\Psi^j$ can be expressed as a linear combination of a finite set of predetermined orientations $T^d(h_p)\Psi^j$, $p=1, \dots, M$. Consequently, for each position $\eta \in \mathbb{S}^{d-1}$, the frame coefficients $\langle f, T^d(g_\eta h)\Psi^j\rangle_{\mathbb{S}^{d-1}}$, $h \in SO(d-1)$, are completely determined by the finite set of samples
\begin{equation}\label{frame coeff eq1}
    \langle f, T^d(g_\eta h_p)\Psi^j\rangle_{\mathbb{S}^{d-1}}, \quad p=1, \dots, M.
\end{equation}
In this sense, steerable frames can also be interpreted as frames with limited directional resolution (or sensitivity), as they probe only a fixed number of orientations at each position. The degree of directionality associated with a $K$-steerable sequence grows with $K$. In particular, zonal/isotropic frames correspond precisely to the case $K=0$, for which one may take $M=1$ in \eqref{frame coeff eq1}.

Although limiting directional sensitivity can be a disadvantage in some situations, steerability also comes with considerable benefits. Indeed, in contrast to the general case, steerable frames require fewer rotations at each scale, leading to a reduction in computational cost. For a more detailed discussion of the latter, we refer to \autoref{subsec4.5}. Moreover, in the articles \cite{bib9,bib31, bib37} the property of steerability was essential in deriving a quasi-exponential localization bound for directional wavelets. Indeed, for our investigations in \autoref{subsec4.3}, where we study frames that are optimally localized in terms of a spherical uncertainty principle, steerability will also be required.

The following proposition provides a useful characterization of steerable systems in terms of their analysis coefficients.
\begin{proposition}\label{propSteerable}
A sequence $(\Psi^j)_{j=0}^\infty \subset \Pi(\mathbb{S}^{d-1})$ is $K$-steerable w.r.t.\ $SO(d-1)$ if and only if, for each $j \in \mathbb{N}_0$ and $f \in L^2(\mathbb{S}^{d-1})$, the function
\begin{equation*}
h \mapsto \langle f, T^d(h)\Psi^j\rangle_{\mathbb{S}^{d-1}}
\end{equation*}
belongs to $\mathcal{M}_K^1(SO(d-1))$.
\end{proposition}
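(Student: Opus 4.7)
The plan is to prove both implications by expanding $T^d(h)\Psi^j$ in the spherical harmonic basis and exploiting that the space $\mathcal{M}_K^1(SO(d-1))$ is invariant under complex conjugation, a fact I would record separately at the start: from unitarity of $T^{d-1}(h)$ we get $\overline{t_{k,m}^{d-1,n}(h)} = t_{m,k}^{d-1,n}(h^{-1})$, and since the class-$1$ representations of $SO(d-1)$ on $\mathcal{H}_n^{d-1}$ admit real realizations (in a basis of real spherical harmonics), the map $h\mapsto t_{m,k}^{d-1,n}(h^{-1})$ is again a linear combination of matrix functions $t_{r,s}^{d-1,n}$ and therefore lies in $\mathcal{M}_K^1(SO(d-1))$ whenever $n\leq K$.

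Necessity is then immediate. If $(\Psi^j)_{j=0}^\infty$ is $K$-steerable with $T^d(h)\Psi^j = \sum_{p=1}^M v_p(h)\,T^d(h_p)\Psi^j$, then for every $f\in L^2(\mathbb{S}^{d-1})$
\begin{equation*}
\langle f, T^d(h)\Psi^j\rangle_{\mathbb{S}^{d-1}} \;=\; \sum_{p=1}^M \overline{v_p(h)}\,\langle f, T^d(h_p)\Psi^j\rangle_{\mathbb{S}^{d-1}},
\end{equation*}
which is a fixed finite linear combination of the functions $\overline{v_p}\in\mathcal{M}_K^1(SO(d-1))$ and hence lies in $\mathcal{M}_K^1(SO(d-1))$.

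For sufficiency I would use that $\Psi^j\in\Pi_{N_j}(\mathbb{S}^{d-1})$ has its entire $SO(d-1)$-orbit inside the finite-dimensional space $\Pi_{N_j}(\mathbb{S}^{d-1})$, so the linear span $W^j := \vspan\{T^d(h)\Psi^j : h\in SO(d-1)\}$ has finite dimension $D$. I pick rotations $h_1,\dots,h_D\in SO(d-1)$ so that $\{T^d(h_m)\Psi^j\}_{m=1}^D$ is a basis of $W^j$ and let $\{G_m\}_{m=1}^D\subset W^j$ denote the biorthogonal dual basis, characterized by $\langle T^d(h_m)\Psi^j, G_n\rangle_{\mathbb{S}^{d-1}} = \delta_{m,n}$. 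Expanding $T^d(h)\Psi^j\in W^j$ in this basis gives
\begin{equation*}
T^d(h)\Psi^j \;=\; \sum_{m=1}^D v_m(h)\, T^d(h_m)\Psi^j, \qquad v_m(h) := \langle T^d(h)\Psi^j, G_m\rangle_{\mathbb{S}^{d-1}}.
\end{equation*}
The standing hypothesis applied to $f = G_m$ shows that $\overline{v_m(h)} = \langle G_m, T^d(h)\Psi^j\rangle_{\mathbb{S}^{d-1}}$ belongs to $\mathcal{M}_K^1(SO(d-1))$, and the conjugation-closure remark then yields $v_m\in\mathcal{M}_K^1(SO(d-1))$, which is precisely the steerability relation of \autoref{steerability}.

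The main obstacle I anticipate is the conjugation-closure of $\mathcal{M}_K^1(SO(d-1))$, which is not isolated as a lemma in \autoref{sec2} but is essential in both directions; it deserves to be verified explicitly as above. A secondary mild point is the extraction of a finite basis $\{T^d(h_m)\Psi^j\}_{m=1}^D$ from the continuous orbit, but this is immediate once one notes the orbit lies in the finite-dimensional space $\Pi_{N_j}(\mathbb{S}^{d-1})$.
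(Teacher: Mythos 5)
Your necessity direction is correct and is essentially the paper's; the conjugation-closure of $\mathcal{M}_K^1(SO(d-1))$ that you verify is indeed the point being used, and the paper records it already in \autoref{sec2} (where it is observed that $\overline{t_{m,k}^{d,n}}$ is a linear combination of the $t_{r,s}^{d,n}$). The genuine gap is in your sufficiency direction. The rotations $h_1,\dots,h_D$ and the steering functions $v_1,\dots,v_D$ you construct are built from the orbit space $W^j=\vspan\{T^d(h)\Psi^j : h\in SO(d-1)\}$ and its biorthogonal dual basis, so $D$, the $h_m$ and the $v_m$ all depend on $j$. But Definition~\ref{steerability} quantifies the $h_p$ and $v_p$ \emph{before} $j$: a single finite set of rotations and a single set of functions in $\mathcal{M}_K^1(SO(d-1))$ must realize the steering relation for every scale simultaneously. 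What your argument establishes is only that each $\Psi^j$ is individually steerable, which is strictly weaker than the definition --- and the uniformity is exactly what is exploited later (e.g.\ in \autoref{subsec4.5}, where one quadrature rule on $SO(d-1)$ with $M$ points serves all scales $j$).

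The paper avoids this by never leaving the fixed, $j$-independent space $\mathcal{M}_K^1(SO(d-1))$: it picks points $h_p$ and weights $w_p$ of a quadrature rule exact for products of two elements of $\mathcal{M}_K^1(SO(d-1))$ --- data depending only on $K$ and $d$ --- and uses the reproducing kernel of that space to write every $F\in\mathcal{M}_K^1(SO(d-1))$ as $F(h)=\sum_{p=1}^M \overline{v_p(h)}\,F(h_p)$ with explicit $v_p$ that do not depend on $j$ or $f$; applying this to $F=\langle f,T^d(\cdot)\Psi^j\rangle_{\mathbb{S}^{d-1}}$ and letting $f$ range over $L^2(\mathbb{S}^{d-1})$ gives the uniform steering identity. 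Your approach via the orbit span and a dual basis is genuinely different and would be a clean way to prove per-polynomial steerability, but to meet the definition you should replace the $j$-dependent basis of $W^j$ by a fundamental (norming) set of points for $\mathcal{M}_K^1(SO(d-1))$ together with its Lagrange-type basis functions; with that substitution the rest of your argument goes through.
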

\begin{proof}
If $(\Psi^j)_{j=0}^\infty$ is steerable, it follows directly that the map $h \mapsto \langle f, T^d(h)\Psi^j\rangle_{\mathbb{S}^{d-1}}$ is in $\mathcal{M}_K^1(SO(d-1))$.

For the other direction, we assume that
\begin{equation*}
h \mapsto \langle f, T^d(h)\Psi^j\rangle_{\mathbb{S}^{d-1}}
\end{equation*}
belongs to $\mathcal{M}_K^1(SO(d-1))$ for any $j \in \mathbb{N}_0$ and $f \in L^2(\mathbb{S}^{d-1})$. Let $h_p \in SO(d-1)$, $w_p\in \mathbb{C}$ for $p=1, \dots, M$ such that
\begin{equation*}
\sum_{p=1}^M w_p \, f_1(h_p) f_2(h_p) = \int_{SO(d-1)} f_1 f_2 \, \mathrm{d}\mu_{d-1} \qquad \text{if } f_1, f_2 \in \mathcal{M}_K^1(SO(d-1)).
\end{equation*} 
Then it is easy to see that
\begin{equation*}
\langle f, T^d(h)\Psi^j\rangle_{\mathbb{S}^{d-1}} = \langle f,  \sum_{p=1}^M \overline{v_p(h)} \, T^d(h_p)\Psi^j\rangle_{\mathbb{S}^{d-1}},
\end{equation*}
where
\begin{equation*}
v_p = w_p\sum_{n=0}^K \dim \mathcal{H}_n^{d-1} \sum_{k, k' \in \mathcal{I}_n^{d-1}} \overline{t_{k, k'}^{d-1, n}(h_p)} \, t_{k, k'}^{d-1, n}.
\end{equation*}
Since the latter holds for any $f \in L^2(\mathbb{S}^{d-1})$, it follows that $(\Psi^j)_{j=0}^\infty$ is $K$-steerable.
\end{proof}

Finally, we mention that in terms of the specific choice of spherical harmonics $Y_k^{d, n}$ given in \hyperref[appendix A]{Appendix A}, the construction of steerable systems is particularly convenient.
\begin{lemma}\label{lemmasteerable}
A sequence $(\Psi^j)_{j=0}^\infty \subset \Pi(\mathbb{S}^{d-1})$ is $K$-steerable with respect to $SO(d-1)$ if and only if for every $n\in\mathbb{N}_0$ and $k=(k_1,\dots, k_{d-2}) \in \mathcal{I}_n^d$, 
\[
 \langle \Psi^j , Y_k^{d,n} \rangle_{\mathbb{S}^{d-1}} = 0, \quad \text{whenever } \lvert k_1 \rvert > K,
\]
where $Y_k^{d,n}$ is defined by \eqref{y_n^k def}.
\end{lemma}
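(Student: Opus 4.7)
The plan is to invoke \hyperref[propSteerable]{Proposition~\ref*{propSteerable}} so that the task reduces to showing that the vanishing condition on the Fourier coefficients is equivalent to
\[
h \mapsto \langle f, T^d(h)\Psi^j\rangle_{\mathbb{S}^{d-1}} \in \mathcal{M}_K^1(SO(d-1)) \qquad \text{for all } f\in L^2(\mathbb{S}^{d-1}), \; j \in \mathbb{N}_0.
\]
To analyze the left-hand side explicitly, I would expand $T^d(h)\Psi^j$ via \eqref{eq34}, obtaining
\[
\langle f, T^d(h)\Psi^j\rangle_{\mathbb{S}^{d-1}} = \sum_{n=0}^{N_j} \sum_{\ell \in \mathcal{I}_n^d}\sum_{k\in\mathcal{I}_n^d} \overline{\Psi^j(n,k)}\,\langle f, Y_\ell^{d,n}\rangle_{\mathbb{S}^{d-1}}\, \overline{t_{\ell,k}^{d,n}(h)},
\]
and then exploit \hyperref[lemma4]{Lemma~\ref*{lemma4}} to reduce each $t_{\ell,k}^{d,n}(h)$ with $h\in SO(d-1)$ to a matrix function on $SO(d-1)$. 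For $d\geq 4$ one gets $t_{\ell,k}^{d,n}(h) = \delta_{\ell_1,k_1}\, t^{d-1,k_1}_{(\ell_2,\ldots,\ell_{d-2}),(k_2,\ldots,k_{d-2})}(h)$, and for $d=3$ one has $t_{\ell,k}^{3,n}(h(\gamma)) = \delta_{\ell,k}\,\exp(\mathrm{i}k\gamma)$; in both cases the degree of the resulting matrix function on $SO(d-1)$ is $|k_1|$.

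For the ``if'' direction, suppose $\Psi^j(n,k)=0$ whenever $|k_1|>K$. Then in the expansion above only matrix functions on $SO(d-1)$ of degree $|k_1|\leq K$ survive, so the sum lies in $\mathcal{M}_K^1(SO(d-1))$ (using that this space is invariant under complex conjugation, which follows from unitarity of $T^{d-1}$). For the ``only if'' direction, fix $n\in\mathbb{N}_0$ and $k^*\in \mathcal{I}_n^d$ with $|k^*_1|>K$; I would test against $f = Y_{k^*}^{d,n}$ (so that $\ell = k^*$), which isolates the inner sum
\[
\overline{\sum_{k\in\mathcal{I}_n^d,\, k_1 = k^*_1} \Psi^j(n,k)\, t^{d-1,k^*_1}_{(k^*_2,\ldots,k^*_{d-2}),(k_2,\ldots,k_{d-2})}(h)}.
\]
This function lies in the span of matrix functions on $SO(d-1)$ of degree exactly $|k^*_1|>K$. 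By orthogonality of matrix functions of different degrees on $SO(d-1)$, the only element of this span that belongs to $\mathcal{M}_K^1(SO(d-1))$ is $0$; linear independence of the matrix functions $t^{d-1,k^*_1}_{\cdot,\cdot}$ then forces $\Psi^j(n,k)=0$ for every $k$ with $k_1 = k^*_1$, in particular $\Psi^j(n,k^*)=0$.

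The main technical point, and what I would spend the most care on, is the bookkeeping of indices when passing through \hyperref[lemma4]{Lemma~\ref*{lemma4}} and identifying the ``first index'' $k_1$ as the degree of the resulting matrix function on $SO(d-1)$; this is what ties the abstract condition $\langle f, T^d(\cdot)\Psi^j\rangle \in \mathcal{M}_K^1(SO(d-1))$ to the concrete Fourier-side condition $|k_1|\leq K$. The cases $d=3$ and $d\geq 4$ must be treated in parallel but are structurally identical once the appropriate formula from \hyperref[lemma4]{Lemma~\ref*{lemma4}} is invoked.
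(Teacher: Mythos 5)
Your proposal is correct and follows essentially the same route as the paper: reduce the claim to \hyperref[propSteerable]{Proposition~\ref*{propSteerable}}, expand $\langle f, T^d(h)\Psi^j\rangle_{\mathbb{S}^{d-1}}$ in the matrix functions via \eqref{eq34}, and apply \hyperref[lemma4]{Lemma~\ref*{lemma4}} to identify $\lvert k_1\rvert$ as the degree of the resulting matrix function on $SO(d-1)$. The paper compresses both directions into one displayed identity (citing \cite{bib21} for $d=3$), whereas you spell out the ``only if'' step by testing against $f=Y_{k^*}^{d,n}$ and invoking orthogonality and linear independence of the $t^{d-1,k_1^*}_{\cdot,\cdot}$; this is a correct filling-in of the same argument, not a different one.
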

\begin{proof}
For $d=3$, this fact was discussed in \cite{bib21}. In the case $d\geq 4$, letting $\Psi^j(n, k) = \langle \Psi^j , Y_k^{d,n} \rangle_{\mathbb{S}^{d-1}}$, a simple calculation yields
\begin{equation*}
     \langle f, T^d(h)\Psi^j\rangle_{\mathbb{S}^{d-1}} = \sum_{n=0}^\infty \sum_{k_1=0}^n \sum_{m \in \mathcal{I}_{k_1}^{d-1}}\sum_{\ell \in \mathcal{I}_{k_1}^{d-1}} \overline{\Psi^j(n, (k_1, m))} \, \overline{t_{m, \ell}^{d-1, k_1}(h)} \, \langle f, Y_{(k_1, \ell)}^{d, n}\rangle_{\mathbb{S}^{d-1}},
\end{equation*}
for each $f \in L^2(\mathbb{S}^{d-1})$ and $h\in SO(d-1)$. Thus, the assertion follows from \hyperref[propSteerable]{Proposition~\ref*{propSteerable}}.
\end{proof}

\subsection{Rotational Symmetries}\label{subsec4.2}
In the construction of polynomial frames $\mathcal{X}[(\Psi^j)_{j=0}^\infty]$ as in \eqref{ourFrames}, the incorporation of certain rotational symmetries can be another feasible strategy to reduce computational cost. To provide a formal framework, we introduce the concept of $SO(m)$-invariant systems, which have previously been utilized in the setting of highly localized and directional tight frames for $L^2(\mathbb{S}^{d-1})$ in \cite{bib37}.
\begin{definition}
    Let $m\in \{2, \dots, d-1\}$. A sequence $(\Psi^j)_{j=0}^\infty \subset \Pi(\mathbb{S}^{d-1})$ is called $SO(m)$-invariant, if
    \begin{equation*}
        T^d(h)\Psi^j = \Psi^j, \qquad \text{for each } h\in SO(m), \; j \in \mathbb{N}_0.
    \end{equation*}
    A system $\mathcal{X}[(\Psi^j)_{j=0}^\infty]$ generated by such a sequence will also be called $SO(m)$-invariant.
\end{definition}
It should be noted that an element $\sqrt{\mu_{j, r}}\, T^d(g_{j, r} ) \Psi^j$ of an $SO(m)$-invariant system $\mathcal{X}[(\Psi^j)_{j=0}^\infty]$ as defined in \eqref{ourFrames} is in general not invariant w.r.t.\ $SO(m)$ but w.r.t.\ its conjugate subgroup $g_{j, r} SO(m) g_{j, r}^{-1}$, since
\begin{equation*}
   T^d(g_{j, r} h g_{j, r}^{-1}) T^d(g_{j, r} ) \Psi^j = T^d(g_{j, r} ) \Psi^j, \qquad \text{for each } h \in SO(m).
\end{equation*}

Compared to the general case, $SO(m)$-invariant frames $\mathcal{X}[(\Psi^j)_{j=0}^\infty]$ require fewer rotations at each scale $j$ (see \autoref{subsec4.5} for more details). In this context, $SO(d-1)$-invariant systems, consisting of zonal (isotropic) functions, are the most computationally efficient, yet they provide no directional sensitivity. Thus, $SO(d-2)$-invariant systems are of particular interest, as they allow for highly directional constructions while retaining a substantial degree of symmetry. Explicit examples of such frames are presented in \autoref{sec5}.

As with steerability, $SO(m)$-invariance admits a particularly simple representation in terms of the spherical harmonics defined in \eqref{y_n^k def}. Indeed, it is straightforward to see that a sequence $(\Psi^j)_{j=0}^\infty \subset \Pi(\mathbb{S}^{d-1})$ is $SO(m)$-invariant if and only if 
\begin{equation}\label{k_(d-m)>0}
        \Psi^j(n, k)=0, \quad \text{whenever } k_{d-m}\neq0.
\end{equation}

\subsection{Localization}\label{subsec4.3}
While localization properties of zonal/isotropic polynomial frames have been extensively studied in the literature (see \cite{bib40, bib48, bib49, bib11, bib14, bib15, bib30, bib3, bib16, bib25, bib78}), comparatively little is known in the directional setting.  Here, pointwise localization estimates have been established for certain systems on $\mathbb{S}^2$ in \cite{bib26, bib31, bib9} and on $\mathbb{S}^{d-1}$ with $d \geq 3$ in \cite{bib37}. As in the zonal case, such estimates require a high order of smoothness in the Fourier domain.

In this subsection, we investigate localization in terms of a well-known spherical uncertainty principle \cite{bib38}. Specifically, we derive sufficient conditions under which a frame $\mathcal{X}[(\Psi^j)_{j=0}^\infty]$ is optimally concentrated in space, thereby generalizing a previous result for zonal systems given in \cite{bib11}. The conditions imposed on the Fourier coefficients $\Psi^j(n, k)$ are relatively mild compared to those required for pointwise estimates, allowing for a broad class of well-localized non-zonal polynomial frames.

Many of the following considerations and calculations are similar to those in \cite{bib11, bib40, bib41}, and we adopt some of the notations used therein. All constants $c, c_1, c_2, \dots$ in this subsection are understood to be independent of the scale $j \in \mathbb{N}_0$, although they may depend on $d$. Moreover, the values of the constants can change with each appearance. For two nonnegative sequences $(a_j)_{j=0}^\infty$ and $(b_j)_{j=0}^\infty$, we write $a_j\sim b_j$ to indicate that there exist constants $0<c_1\leq c_2$ such that
\begin{equation*}
    c_1 a_j \leq b_j \leq c_2 a_j, \qquad \text{for each } j \in \mathbb{N}_0.
\end{equation*}

The spherical uncertainty principle given in \cite[Corollary~5.1]{bib38} implies that for any $f \in C^2(\mathbb{S}^{d-1})$ the inequality
\begin{equation}\label{goh_goodman}
\frac{(d-1)^2}{4} \leq \left( \frac{\| f \|_{L^2(\mathbb{S}^{d-1})}^4}{\| \int_{\mathbb{S}^{d-1}} x \lvert f(x)\rvert^2 \, \mathrm{d}\omega_{d-1}(x) \|_2^2  }-1 \right) \frac{- \langle \Delta_{\mathbb{S}^{d-1}} f , f \rangle_{\mathbb{S}^{d-1}}}{\| f \|_{L^2(\mathbb{S}^{d-1})}^2}
\end{equation}
holds, provided that the denominators on the right-hand side are all nonzero. Here, we note that the original formula in \cite[Corollary~5.1]{bib38} is given in terms of the surface gradient, instead of the Laplace-Beltrami operator. To obtain \eqref{goh_goodman}, we utilized the classical Green-Beltrami identity (see, e.g., \cite[Proposition~3.3]{bib75}).  We also note that the uncertainty principle \eqref{goh_goodman} is a generalization of earlier results in \cite{bib42} and \cite{bib45}, where the corresponding inequality was established for signals on the $2$-sphere and zonal functions on $\mathbb{S}^{d-1}$, $d\geq 3$, respectively. Using the notations
\begin{equation}\label{VarS}
\mathrm{Var}_{\mathrm{S}}(f) = \frac{1-\| \xi_0(f) \|_2^2}{\| \xi_0(f) \|_2^2}, \qquad \xi_0(f) = \frac{ \int_{\mathbb{S}^{d-1}} x \lvert f(x) \rvert^2 \, \mathrm{d}\omega_{d-1}(x) }{\| f\|_{L^2(\mathbb{S}^{d-1})}^2},
\end{equation}
and
\begin{equation*}
\mathrm{Var}_{\mathrm{M}}(f) = \frac{- \langle \Delta_{\mathbb{S}^{d-1}} f , f \rangle_{\mathbb{S}^{d-1}}}{\| f \|_{L^2(\mathbb{S}^{d-1})}^2},
\end{equation*}
for the variance in space and momentum, respectively, \eqref{goh_goodman} can be written as
\begin{equation}\label{goh_goodman2}
\frac{(d-1)^2}{4} \leq \mathrm{Var}_{\mathrm{S}}(f) \,  \mathrm{Var}_{\mathrm{M}}(f).
\end{equation}

We are interested in polynomial frames $\mathcal{X}[(\Psi^j)_{j=0}^\infty]$, as given in \eqref{ourFrames}, that are optimally localized in space in terms of the above uncertainty principle. Clearly, finding such a system reduces to finding a frame-generating sequence $(\Psi^j)_{j=0}^\infty\subset \Pi(\mathbb{S}^{d-1})$, such that $(\mathrm{Var}_{\mathrm{S}}(\Psi^j))_{j=0}^\infty$ decays at an optimal rate. Assuming that $\Psi^j\in \Pi_{N_j}(\mathbb{S}^{d-1})$, it is easy to see that
\begin{equation*}
 \mathrm{Var}_{\mathrm{M}}(\Psi^j) = \frac{\sum_{n=0}^{N_j}n(n+d-2) \sum_{k \in \mathcal{I}_n^d} \lvert \Psi^j(n, k) \rvert^2}{\| \Psi^j\|_{L^2(\mathbb{S}^{d-1})}^2} \leq c N_j^2.
\end{equation*}
Consequently, the uncertainty principle \eqref{goh_goodman2} implies that the spatial localization is limited by
\begin{equation*}
\mathrm{Var}_{\mathrm{S}}(\Psi^j) \geq c N_j^{-2}.
\end{equation*}
Therefore, optimal localization is obtained if $\mathrm{Var}_{\mathrm{S}}(\Psi^j) \leq c N_j^{-2}$, i.e., if 
\begin{equation}\label{eqoptimal}
\mathrm{Var}_{\mathrm{S}}(\Psi^j) \sim N_j^{-2}.
\end{equation}
Moreover, if 
\begin{equation*}
    1-\xi_0^d(\Psi^j) \leq cN_j^{-2},
\end{equation*}
where $\xi_0^d (\Psi^j) = \langle \xi_0(\Psi^j), e^d \rangle$, then it is easy to see that the corresponding sequence $(\Psi^j)_{j=0}^\infty$ is optimally localized (in the sense of \eqref{eqoptimal}) at the North Pole $e^d$.

We note that the polynomials of minimal space variance w.r.t.\ \eqref{goh_goodman2} have been computed in \cite[Theorem~4.3]{bib11} and \cite{bib40}, and they turn out to be zonal/isotropic. However, as in \cite[Theorem~4.1]{bib11}, we are merely interested in sequences of functions whose localization rate is of the optimal order in the sense of \eqref{eqoptimal}.

In order to compute the space variance $\mathrm{Var}_{\mathrm{S}}(\Psi^j)$, we will work with a well-known explicit orthonormal basis of spherical harmonics (see, e.g., \cite[p.~466]{bib32}) presented in \hyperref[appendix A]{Appendix A}. Thus, for the remainder of this subsection (and, in fact, the remainder of this article) we will exclusively refer to the specific basis of spherical harmonics in \eqref{y_n^k def} as well as the corresponding index set \eqref{indexset} when using the symbols $Y_k^{d, n}$ and $\mathcal{I}_n^d$, respectively. With respect to this explicitly given system, we can now compute the space variance of a signal $f$ in terms of its Fourier coefficients $f(n, k)=\langle f , Y_k^{d, n}\rangle_{\mathbb{S}^{d-1}}$, as previously demonstrated in \cite{bib41}. In particular, we will make use of the following lemma. Here, our convention that $f(n, k)=0$ if $k \notin \mathcal{I}_n$ or $n=-1$ proves to be useful.
\begin{lemma}\label{lemmaUP}
The $d$-th component of the vector $\xi_0(f) = (\xi_0^1(f), \dots, \xi_0^d(f))$ defined in \eqref{VarS} satisfies
\begin{equation}\label{eqUP}
\xi_0^d(f) \|f \|_{L^2(\mathbb{S}^{d-1})}^2 = \sum_{n=0}^\infty \sum_{k \in \mathcal{I}_n^d} f(n, k) \left( \overline{f(n+1, k)} \, Q_{d}^{k_1}(n) + \overline{f(n-1, k)} \,Q_{d}^{k_1}(n-1) \right),
\end{equation}
where
\begin{equation*}
Q_d^{k_1}(n) = \frac{1}{2}\sqrt{1 - \frac{q_d(k_1)}{n^2+n(d-1)+d(d-2)/4}},
\end{equation*}
and
\begin{equation*}
q_d(k_1)=\lvert k_1 \rvert^2 + \lvert k_1 \rvert (d-3) - (d-2)(1-d/4).
\end{equation*}
\end{lemma}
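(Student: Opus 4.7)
The plan is to reduce the expression $\xi_0^d(f) \|f\|_{L^2(\mathbb{S}^{d-1})}^2 = \int_{\mathbb{S}^{d-1}} x_d \lvert f(x)\rvert^2\,\mathrm{d}\omega_{d-1}(x)$ to a sum over Fourier coefficients by exploiting the explicit basis from Appendix A. Expanding $f = \sum_{n,k} f(n,k) Y_k^{d,n}$ and using linearity reduces the task to computing the coupling integrals $\int_{\mathbb{S}^{d-1}} x_d\, Y_k^{d,n}\, \overline{Y_{k'}^{d,n'}}\,\mathrm{d}\omega_{d-1}$. The key structural observation is that multiplication by $x_d = \cos\theta_{d-1}$ acts as a three-term operator shifting $n$ by $\pm 1$ while leaving the multi-index $k$ fixed. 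More precisely, I would aim to prove the recurrence
\begin{equation*}
x_d\, Y_k^{d,n} = Q_d^{k_1}(n)\, Y_k^{d,n+1} + Q_d^{k_1}(n-1)\, Y_k^{d,n-1}.
\end{equation*}
Once this is in hand, orthonormality of the basis immediately collapses the coupling integrals to Kronecker deltas, and a relabeling of the summation indices yields \eqref{eqUP}, with the conventions $f(-1,k)=0$ and $f(n,k)=0$ for $k \notin \mathcal{I}_n^d$ taking care of the edge cases where $(n\pm 1, k)$ is not an admissible Fourier index.

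The bulk of the work lies in establishing the recurrence with the stated coefficients. In the Vilenkin--Klimyk basis from Appendix A, the entire dependence of $Y_k^{d,n}$ on $\theta_{d-1}$ enters through a factor proportional to $(\sin\theta_{d-1})^{k_1}\, C_{n-k_1}^{k_1+(d-2)/2}(\cos\theta_{d-1})$, while the remaining factors depend only on $(\theta_1, \dots, \theta_{d-2})$ and on the indices $(k_2,\dots,k_{d-2})$. The classical Gegenbauer recurrence
\begin{equation*}
2(m+\lambda)\, t\, C_m^\lambda(t) = (m+1)\, C_{m+1}^\lambda(t) + (m+2\lambda-1)\, C_{m-1}^\lambda(t),
\end{equation*}
applied with $m = n-k_1$ and $\lambda = k_1+(d-2)/2$, then expresses $\cos\theta_{d-1}\, C_{n-k_1}^{\lambda}(\cos\theta_{d-1})$ as a linear combination of $C_{(n+1)-k_1}^{\lambda}$ and $C_{(n-1)-k_1}^{\lambda}$. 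After multiplying by the normalization ratios $h_{m\pm 1}^\lambda/h_m^\lambda$ of the orthonormalized Gegenbauer polynomials, the raw coefficients $(m+1)/[2(m+\lambda)]$ and $(m+2\lambda-1)/[2(m+\lambda)]$ both collapse into symmetric radicals, and a direct algebraic simplification (exploiting the factorizations $q_d(k_1) = (k_1+(d-4)/2)(k_1+(d-2)/2)$ and $n^2+n(d-1)+d(d-2)/4 = (n+(d-2)/2)(n+d/2)$) produces exactly $Q_d^{k_1}(n)$ in the $Y_k^{d,n+1}$ slot and $Q_d^{k_1}(n-1)$ in the $Y_k^{d,n-1}$ slot.

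The main obstacle is this final algebraic simplification: carefully tracking the Gegenbauer normalizations through the recurrence and recognizing that the result collapses to the compact symmetric form claimed. A useful sanity check throughout is the self-adjointness of multiplication by $x_d$ on $L^2(\mathbb{S}^{d-1})$, which forces the coefficient of $Y_k^{d,n+1}$ in $x_d Y_k^{d,n}$ to equal the coefficient of $Y_k^{d,n}$ in $x_d Y_k^{d,n+1}$; this explains a priori why the same function $Q_d^{k_1}$ appears in both slots (with a unit index shift) and serves as a cross-check of the normalization computation. Once the recurrence is verified, the remainder of the proof is purely mechanical: substitute into the expansion of $\lvert f \rvert^2$, apply orthonormality, and collect terms to obtain \eqref{eqUP}.
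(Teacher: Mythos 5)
Your proposal is correct and follows essentially the same route as the paper's Appendix~B: expand $f$ in the explicit basis of \hyperref[appendix A]{Appendix~A}, isolate the $\theta_{d-1}$-dependence, apply the Gegenbauer three-term recurrence with $\lambda=|k_1|+(d-2)/2$, and track the normalization ratios $A_k^n/A_k^{n\pm1}$ to arrive at $Q_d^{k_1}$; your factorizations $q_d(k_1)=(|k_1|+\tfrac{d-4}{2})(|k_1|+\tfrac{d-2}{2})$ and $n^2+n(d-1)+\tfrac{d(d-2)}{4}=(n+\tfrac{d-2}{2})(n+\tfrac{d}{2})$ do reproduce the paper's coefficient $\sqrt{(n-|k_1|+1)(n+|k_1|+d-2)/((2n+d)(2n+d-2))}$. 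Packaging the computation as the ladder identity $x_d\,Y_k^{d,n}=Q_d^{k_1}(n)Y_k^{d,n+1}+Q_d^{k_1}(n-1)Y_k^{d,n-1}$ rather than evaluating the coupling integrals $I(n,n',k,k')$ directly is only a cosmetic reorganization (and your self-adjointness cross-check is a nice touch), so no substantive difference remains.
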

The above representation of $\xi_0^d(f)$ follows from a more general result in \cite{bib41} and has been established for zonal signals in \cite{bib11}. However, we were unable to find an exact reference from which the formula \eqref{eqUP} is readily apparent. Hence, we decided to provide a proof of the latter in \hyperref[appendix B]{Appendix B}.

In order to prove an optimal localization rate of the form \eqref{eqoptimal}, we will require a number of conditions on the sequence $(\Psi^j)_{j=0}^\infty$ 
with $\Psi^j \in \Pi_{N_j}(\mathbb{S}^{d-1})$ that are listed below. 

\begin{enumerate}[label=(C\arabic*)]
  \item \label{C1} It holds that $\|\Psi^j\|_{L^2(\mathbb{S}^{d-1})}^2 \sim N_j^{d-1}$.
    \item \label{C2} There is a sequence $(M_j)_{j=0}^\infty
    \subset \mathbb{N}_0$ with $M_j \sim N_j$ such that 
      \begin{equation*}
      \Psi^j(n, k) = 0, \quad\text{ if } n \notin \{ M_j, M_j+1, \dots, N_j\}.
      \end{equation*}
  \item\label{C3} With the same sequence 
$M_j$ as in \ref{C2}, it holds for all $M_j \leq n \leq N_j$ that 
    \begin{equation*}
        \left \vert \frac{\Psi^j(n+1, k) + \Psi^j(n-1, k)}{2} - \Psi^j(n, k) \right\vert \leq cN_j^{(d-6)/2}, \phantom{ \lvert\Psi^j(n, k)\rvert}
    \end{equation*}
    where the constant $c>0$ does not depend on $n, k$ or $j$.
    
\end{enumerate}
Examples of frames satisfying these conditions are presented in \autoref{sec5}.
We are now in a position to present the main result of this section, which establishes sufficient conditions under which a polynomial sequence $(\Psi^j)_{j=0}^\infty$ is optimally localized at the North Pole $e^d$ in terms of the uncertainty principle \eqref{goh_goodman2}.

\begin{proposition}\label{proplocalization}
Let $(\Psi^j)_{j=0}^\infty $
be a $K$-steerable frame-generating sequence with $\Psi^j \in \Pi_{N_j}(\mathbb{S}^{d-1})$. If conditions \ref{C1}-\ref{C3} are satisfied, then
\begin{equation}\label{localization eq1}
    1-\xi_0^d(\Psi^j) \leq cN_j^{-2},
\end{equation}
and, consequently,
\begin{equation}\label{localization eq2}
\mathrm{Var}_{\mathrm{S}}(\Psi^j) \leq c N_j^{-2}.
\end{equation}
\end{proposition}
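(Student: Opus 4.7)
The plan is to bound $|1 - \xi_0^d(\Psi^j)|$ by applying \hyperref[lemmaUP]{Lemma~\ref*{lemmaUP}} to $f=\Psi^j$ and treating the weight $Q_d^{k_1}(n)$ as a perturbation of the constant $\tfrac{1}{2}$. After re-indexing the $\overline{\Psi^j(n-1,k)}$ contribution in \eqref{eqUP} (which is clean because $\Psi^j(-1,k)=0$ and, by \ref{C2}, $\Psi^j(n,k)=0$ outside $[M_j,N_j]$), the right-hand side splits as $\tfrac{1}{2}\sum_{n,k}\Psi^j(n,k)\overline{\Psi^j(n+1,k)+\Psi^j(n-1,k)} + E_Q$, where $E_Q$ collects all terms carrying the remainder $R_d^{k_1}(n):=Q_d^{k_1}(n)-\tfrac{1}{2}$. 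For $|k_1|\leq K$ (forced by $K$-steerability via \hyperref[lemmasteerable]{Lemma~\ref*{lemmasteerable}}) and $n\geq M_j\sim N_j$ (forced by \ref{C2}), a Taylor expansion of the square root in $Q_d^{k_1}$ gives $|R_d^{k_1}(n)|\leq c/N_j^2$.

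Next I use the algebraic identity $\tfrac{1}{2}[\Psi^j(n+1,k)+\Psi^j(n-1,k)] = \Psi^j(n,k) + \tfrac{1}{2}D^2\Psi^j(n,k)$ with the symmetric second difference $D^2\Psi^j(n,k) := \Psi^j(n+1,k)+\Psi^j(n-1,k)-2\Psi^j(n,k)$. The $\Psi^j(n,k)$-part collapses to $\|\Psi^j\|_{L^2(\mathbb{S}^{d-1})}^2$, leaving
\begin{equation*}
\|\Psi^j\|_{L^2(\mathbb{S}^{d-1})}^2 - \xi_0^d(\Psi^j)\|\Psi^j\|_{L^2(\mathbb{S}^{d-1})}^2 \;=\; -\tfrac{1}{2}\,\mathrm{Re}\sum_{n,k}\Psi^j(n,k)\,\overline{D^2\Psi^j(n,k)} \;-\; E_Q.
\end{equation*}

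Both pieces are then shown to be $O(N_j^{d-3})$. For $E_Q$, the bound $|R_d^{k_1}(n)|\leq c/N_j^2$ together with $\sum_{n,k}|\Psi^j(n,k)||\Psi^j(n{\pm}1,k)|\leq\|\Psi^j\|_{L^2(\mathbb{S}^{d-1})}^2\sim N_j^{d-1}$ (via Cauchy--Schwarz and \ref{C1}) yields $|E_Q|\leq c N_j^{d-3}$. For the second-difference term, \ref{C3} supplies the pointwise bound $|D^2\Psi^j(n,k)|\leq c N_j^{(d-6)/2}$; the crucial combinatorial input is that under $K$-steerability, the basis from \hyperref[appendix A]{Appendix~A} restricts nonzero $\Psi^j(n,k)$ to indices $k=(k_1,\dots,k_{d-2})$ with $|k_1|\leq K$, and the chained ordering $n\geq k_1\geq\cdots\geq|k_{d-2}|$ then bounds the entire multi-index in at most $D_{K,d}$ ways, independent of $n$. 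Two successive Cauchy--Schwarz steps — first over $k$ (constant cardinality), then over $n\in[M_j,N_j]$ (length $\sim N_j$) — combined with \ref{C1} give $\sum_{n,k}|\Psi^j(n,k)|\leq c N_j^{d/2}$, whence the $D^2$ contribution is bounded by $c N_j^{(d-6)/2}\cdot N_j^{d/2} = c N_j^{d-3}$.

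Dividing by $\|\Psi^j\|_{L^2(\mathbb{S}^{d-1})}^2\sim N_j^{d-1}$ yields \eqref{localization eq1}. Estimate \eqref{localization eq2} then follows from $\|\xi_0(\Psi^j)\|_2^2\geq|\xi_0^d(\Psi^j)|^2$ and $\xi_0^d(\Psi^j)\in[0,1]$: the numerator $1-\|\xi_0(\Psi^j)\|_2^2\leq 1-|\xi_0^d(\Psi^j)|^2\leq 2(1-\xi_0^d(\Psi^j))\leq cN_j^{-2}$, while the denominator $\|\xi_0(\Psi^j)\|_2^2$ is bounded away from zero for large $j$. The main obstacle is the combinatorial point embedded in the $D^2$-estimate: without steerability, the nonzero coefficients at scale $n$ could range over an index set of cardinality $\sim n^{d-2}$, which Cauchy--Schwarz would convert into an extra $N_j^{(d-3)/2}$ factor and destroy the target rate $N_j^{-2}$. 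Steerability is precisely the ingredient that collapses this count to $O(1)$.
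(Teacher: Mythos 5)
Your argument is correct and follows essentially the same route as the paper's proof: apply Lemma~\ref{lemmaUP}, use $K$-steerability to force $\lvert k_1\rvert\leq K$ so that $Q_d^{k_1}(n)=\tfrac12(1+\mathcal{O}(N_j^{-2}))$ on the support guaranteed by \ref{C2}, isolate the symmetric second difference controlled by \ref{C3}, and divide by $\|\Psi^j\|_{L^2(\mathbb{S}^{d-1})}^2\sim N_j^{d-1}$. The only (immaterial) deviation is that you obtain $\sum_{n,k}\lvert\Psi^j(n,k)\rvert\leq cN_j^{d/2}$ by Cauchy--Schwarz against the $\mathcal{O}(N_j)$ index count forced by steerability, whereas the paper uses the pointwise bound $\lvert\Psi^j(n,k)\rvert\leq cN_j^{(d-2)/2}$ coming from the upper frame bound; both yield the same estimate, and your explicit discussion of the combinatorial count is a point the paper leaves implicit.
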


\begin{proof}
According to \ref{C2} and \hyperref[lemmaUP]{Lemma~\ref*{lemmaUP}}, it holds that
\begin{align}\label{eqzeta_0}
\xi_0^d(\Psi^j) \|\Psi^j\|_{L^2(\mathbb{S}^{d-1})}^2  & = \sum_{n=M_j}^{N_j} \sum_{k \in \mathcal{I}_n^d} \Psi^j(n, k) \left( \overline{\Psi^j(n+1, k)} + \overline{\Psi^j(n-1, k)}\right) \, Q_{d}^{k_1}(n) \nonumber \\
 & \qquad \qquad \qquad + \Psi^j(n, k)\overline{\Psi^j(n-1, k)} \left(  Q_{d}^{k_1}(n-1) -  Q_{d}^{k_1}(n) \right).
\end{align}
By assumption, the sequence $(\Psi^j)_{j=0}^\infty$ is $K$-steerable for some $K\in \mathbb{N}_0$. Hence, as discussed in \hyperref[lemmasteerable]{Lemma~\ref*{lemmasteerable}}, every non-zero summand in \eqref{eqzeta_0} satisfies $\lvert k_1\rvert \leq K$ and it is easy to verify that in this case
\begin{equation}\label{eqasympt1}
    Q_{d}^{k_1}(n) = \frac{1}{2} (1+\mathcal{O}(n^{-2}))
\end{equation}
as well as
\begin{equation}\label{eqasympt2}
    Q_{d}^{k_1}(n-1) -  Q_{d}^{k_1}(n) = \mathcal{O}(n^{-3})
\end{equation}
as $n \rightarrow \infty$. Here, and in the remainder of the proof, the constants given implicitly in the $\mathcal{O}$-notation only depend on $K$, $d$ and the sequence $(\Psi^j)_{j=0}^\infty$. Using \eqref{eqasympt1} and \eqref{eqasympt2}, it follows from \ref{C2} and the Cauchy-Schwarz inequality that
\begin{align}\label{eqasympt3}
    &\xi_0^d(\Psi^j) \|\Psi^j\|_{L^2(\mathbb{S}^{d-1})}^2 \nonumber \\
&  \qquad  \qquad = \mathcal{O}(N_j^{-2}) \|\Psi^j\|_{L^2(\mathbb{S}^{d-1})}^2 + \sum_{n=M_j}^{N_j} \sum_{k \in \mathcal{I}_n^d} \Psi^j(n, k) \frac{\overline{\Psi^j(n+1, k)} + \overline{\Psi^j(n-1, k)}}{2}.
\end{align}
Assuming that \ref{C3} holds, then \eqref{eqasympt3} yields 
\begin{align*}
    \xi_0^d(\Psi^j) \|\Psi^j\|_{L^2(\mathbb{S}^{d-1})}^2 &= (1+ \mathcal{O}(N_j^{-2})) \|\Psi^j \|_{L^2(\mathbb{S}^{d-1})}^2 + \mathcal{O}(N_j^{(d-6)/2}) \sum_{n=M_j}^{N_j}\sum_{k \in \mathcal{I}_n^d}  \Psi^j(n, k).
\end{align*}
In this case, the fact that $(\Psi^j)_{j=0}^\infty$ is a frame-generating sequence implies the inequality $\lvert \Psi^j(n, k)\rvert \leq c N_j^{(d-2)/2}$ for $M_j \leq n \leq N_j$. Thus, utilizing \ref{C1}, we have
\begin{equation*}
    \sum_{n=M_j}^{N_j}\sum_{k \in \mathcal{I}_n^d} \lvert \Psi^j(n, k) \rvert \leq c N_j^{d/2} \leq c N_j^{-(d-2)/2} \| \Psi^j \|_{L^2(\mathbb{S}^{d-1})}^2
\end{equation*}
and, therefore,
\begin{equation}\label{eqzetaapprox}
    \xi_0^d(\Psi^j) = 1 + \mathcal{O}(N_j^{-2}).
\end{equation}

At this point, we have shown that \eqref{eqzetaapprox} holds under the assumptions given in \hyperref[proplocalization]{Proposition~\ref*{proplocalization}}. Hence, we obtain $1-cN_j^{-2} \leq \|\xi_0(\Psi^j) \|_2^2 \leq 1$ and, therefore,
\begin{equation*}
    \mathrm{Var}_{\mathrm{S}}(\Psi^j) \leq c N_j^{-2}.
\end{equation*}
This completes the proof.
\end{proof}
\begin{remark}
    We note that the conclusion of \hyperref[proplocalization]{Proposition~\ref*{proplocalization}} remains valid under a different set of assumptions. Indeed, assume that $(\Psi^j)_{j=0}^\infty$ is merely a steerable sequence with $\Psi^j \in \Pi_{N_j}(\mathbb{S}^{d-1})$ for each $j \in \mathbb{N}_0$, and that condition \ref{C2} holds.  Assume further that, for all 
$M_j \leq n \leq N_j$, 
    \begin{equation}\label{C4}
        \left \vert \frac{\Psi^j(n+1, k) + \Psi^j(n-1, k)}{2} - \Psi^j(n, k) \right\vert \leq cN_j^{-2} \, \lvert\Psi^j(n, k)\rvert,
    \end{equation}
    where the constant $c>0$ does not depend on $n, k$ or $j$. Under this assumption, we obtain 
\begin{equation*}
   \sum_{n=M_j}^{N_j} \sum_{k \in \mathcal{I}_n^d} \Psi^j(n, k) \frac{\overline{\Psi^j(n+1, k)} + \overline{\Psi^j(n-1, k)}}{2} = \|\Psi^j\|_{L^2(\mathbb{S}^{d-1})}^2(1 +  \mathcal{O}(N_j^{-2})),
\end{equation*}
and consequently, \eqref{eqzetaapprox} holds. Therefore, the bounds \eqref{localization eq1} and \eqref{localization eq2} hold.  
\end{remark}

In \autoref{sec5}, we use \hyperref[proplocalization]{Proposition~\ref*{proplocalization}} to present explicit constructions of directional polynomial frames $\mathcal{X}[(\Psi^j)_{j=0}^\infty]$ that are optimally localized in the sense of \eqref{eqoptimal}.

\subsection{Directional Sensitivity}\label{subsec4.4}
Let $(\Psi^j)_{j=0}^\infty \subset \Pi(\mathbb{S}^{d-1})$ be a sequence of polynomials that are well localized at the North Pole. In this case, the frame coefficients
\begin{equation}\label{inner products eq1}
    \langle f, T^d(g_\eta h)\Psi^j\rangle_{\mathbb{S}^{d-1}}, \quad h \in SO(d-1),
\end{equation}
display information about the signal $f \in L^2(\mathbb{S}^{d-1})$ at the position $\eta\in \mathbb{S}^{d-1}$. Depending on the sequence $(\Psi^j)_{j=0}^\infty$, this data can exhibit various complexities. For example, if $(\Psi^j)_{j=0}^\infty$ is $SO(d-1)$-invariant, then the inner products in \eqref{inner products eq1} are constant in $h$. More generally, \hyperref[propSteerable]{Proposition~\ref*{propSteerable}} implies that for a $K$-steerable sequence $(\Psi^j)_{j=0}^\infty$ the frame coefficients \eqref{inner products eq1} constitute a polynomial of degree $K$ on $SO(d-1)$.

In order to assess different systems in terms of the information \eqref{inner products eq1} they provide at each position $\eta\in \mathbb{S}^{d-1}$, we introduce a measure of directional sensitivity. As proposed in \cite{bib21} for the setting of directional wavelets on $\mathbb{S}^2$, we say that a function $\Psi^j \in \Pi(\mathbb{S}^{d-1})$, which is concentrated at $e^d$, exhibits a high directional sensitivity if the auto-correlation function
\begin{equation}\label{auto-correlation}
   SO(d-1) \rightarrow\mathbb{C}, \qquad h \mapsto  \langle T^d(h)\Psi^j, \Psi^j \rangle_{\mathbb{S}^{d-1}},
\end{equation}
is well localized at the identity matrix $\mathrm{I}_{d}$. 

In the case where $(\Psi^j)_{j=0}^\infty$ is $K$-steerable, the auto-correlation functions \eqref{auto-correlation} are polynomials of degree $K$.
Thus, increasing $K$ allows for stronger directionality, whereas $0$-steerable frames (i.e., zonal frames) exhibit no directional sensitivity. 

As demonstrated in \cite{bib9, bib10, bib21} and \cite{bib37}, optimizing the localization of the auto-correlation functions \eqref{auto-correlation} represents a powerful tool for designing directional polynomial frames. In \autoref{sec5}, we will apply this method to certain systems that are optimally localized in the sense of \autoref{subsec4.3}. Applications of localized directional frames include, in particular, the detection of edges (or boundary pixels) in signals. Specifically, directional frames on $\mathbb{S}^2$ have been utilized in the search for cosmic strings in the cosmic microwave background \cite{bib76, bib67, bib77} and in the context of wavelet-based segmentation \cite{bib68}. For a more theoretical discussion, we refer to \cite{bib26, bib31}, where it was proven that directional wavelets \cite{bib9, bib10, bib21} and second-generation curvelets \cite{bib1} can identify edges in signals not only by their position but also by their orientation.

\subsection{A Note on the Quadrature Formulas}\label{subsec4.5}
In the following, we will demonstrate how integration rules \eqref{eq35} can be constructed from spherical quadrature formulas. In contrast to the $SO(d)$ setting, such formulas are well investigated in the literature (see, e.g., \cite{bib34, bib15, bib72, bib73, bib74, bib71} and the references therein). After reviewing the general case, we also discuss relevant special cases where the sequence $(\Psi^j)_{j=0}^\infty$ exhibits certain structural properties that allow for particularly simple discretization grids. We note that all results given in this article are still valid when replacing the original systems $ \mathcal{X}[(\Psi^j)_{j=0}^\infty]$ w.r.t.\ the quadrature rules \eqref{eq35} by the corresponding modifications presented in this subsection. Here, merely some obvious notational adjustments are required.

\subsubsection{The General Case}
We consider a sequence of positive quadrature rules
\begin{equation}\label{quadrature S^(d-1)}
    \int_{\mathbb{S}^{d-1}} f \, \mathrm{d}\omega_{d-1} = \sum_{r=1}^{r_j} \omega_{j, r} f(\eta_{j, r}) \qquad \text{for each } f \in \Pi_{2N_j}(\mathbb{S}^{d-1}), \quad j \in \mathbb{N}_0,
\end{equation}
with points $\eta_{j, r}\in \mathbb{S}^{d-1}$ and corresponding weights $\omega_{j, r}>0$. Moreover, let $h_{j, s}\in SO(d-1)$ and $w_{j, s}>0$ be given such that
\begin{equation}\label{quadrature SO(d-1)}
    \int_{SO(d-1)} f_1 f_2 \, \mathrm{d}\mu_{d-1} = \sum_{s=1}^{s_j} w_{j, s} f_1(h_{j, s}) f_2(h_{j, s}) \quad \text{if } f_1, f_2 \in \mathcal{M}_{N_j}^1(SO(d-1)), \quad j \in \mathbb{N}_0.
\end{equation}
Using \eqref{eq2}, \hyperref[lemma4]{Lemma~\ref*{lemma4}} and \hyperref[lemma product property]{Lemma~\ref*{lemma product property}}, we obtain
\begin{equation*}
    \int_{SO(d)} f_1 f_2 \, \mathrm{d}\mu_{d} = \sum_{r=1}^{r_j} \sum_{s=1}^{s_j} \omega_{j, r} w_{j, s} f_1(g_{\eta_{j, r}} h_{j, s}) f_2(g_{\eta_{j, r}} h_{j, s}) \quad \text{if } f_1, f_2 \in \mathcal{M}_{N_j}^1(SO(d)).
\end{equation*}
Hence, the systems $\mathcal{X}[(\Psi^j)_{j=0}^\infty]$ defined in \eqref{ourFrames} can be replaced by
\begin{equation}\label{ourFrames general case}
    \mathcal{X}[(\Psi^j)_{j=0}^\infty] = \{ \sqrt{\omega_{j, r} w_{j, s}} \, T^d(g_{\eta_{j, r}} h_{j, s})\Psi^j, \; r=1, \dots, r_j, \; s=1, \dots, s_j, \; j \in \mathbb{N}_0\}.
\end{equation}


Repeating the above process, we can now replace the formulas \eqref{quadrature SO(d-1)} with suitable quadrature rules on $\mathbb{S}^{d-2}$ and $SO(d-2)$, and so on. In this way, integration formulas \eqref{eq35} can be obtained solely from positive quadrature rules for polynomials on $\mathbb{S}^{m}$, $m=1, \dots, d-1$. Moreover, in many relevant situations these formulas can be simplified significantly, as we will discuss in the following.

\subsubsection{Steerable Frames}
Let $(\Psi^j)_{j=0}^\infty\subset \Pi(\mathbb{S}^{d-1})$ be a $K$-steerable sequence such that $\Psi^j \in \Pi_{N_j}(\mathbb{S}^{d-1})$ for each $j \in \mathbb{N}_0$. In this setting, instead of the sequence of quadrature rules \eqref{quadrature SO(d-1)}, it suffices to consider a single formula
\begin{equation}\label{quadrature SO(d-1) K-steerable}
\sum_{p=1}^M w_p \, f_1(h_p) f_2(h_p) = \int_{SO(d-1)} f_1 f_2 \, \mathrm{d}\mu_{d-1} \qquad \text{if } f_1, f_2 \in \mathcal{M}_K^1(SO(d-1)),
\end{equation}
where the $h_p\in SO(d-1)$ and $w_p>0$ are suitable points and weights. Indeed, \hyperref[propSteerable]{Proposition~\ref*{propSteerable}} shows that under this assumption, the system defined in \eqref{ourFrames} can be replaced  by 
\begin{equation*}\label{K-steerable frame}
    \mathcal{X}[(\Psi^j)_{j=0}^\infty] = \{\sqrt{\omega_{j, r} w_{ p}}\, T^d(g_{\eta_{j, r}} h_p)\Psi^j, \; p=1,\dots, M, \; r=1, \dots, r_j, \; j \in \mathbb{N}_0 \},
\end{equation*}
where the quadrature points $\eta_{j, r}\in \mathbb{S}^{d-1}$ and weights $\omega_{j, r}>0$ are chosen as in \eqref{quadrature S^(d-1)}.

As a consequence, the number of rotations required at each scale $j$ is significantly reduced compared to the general case. In fact, the computational cost involved is comparable to that of the zonal setting. Also, as discussed above, a formula of the form \eqref{quadrature SO(d-1) K-steerable} can be constructed from quadrature rules for $\Pi_K(\mathbb{S}^{d-2})$, \dots, $\Pi_K(\mathbb{S}^1)$ by repeated application of \hyperref[lemma4]{Lemma~\ref*{lemma4}} and \hyperref[lemma product property]{Lemma~\ref*{lemma product property}}.

\subsubsection{Rotation-Invariant Frames}
$SO(m)$-invariant sequences $(\Psi^j)_{j=0}^\infty$ provide another setting in which the quadrature grids used in \eqref{ourFrames general case} can be simplified. In the following, the two most symmetric cases, namely $m=d-1$ and $m=d-2$, are discussed in detail.

If $(\Psi^j)_{j=0}^\infty$ is $SO(d-1)$-invariant with $\Psi^j \in \Pi_{N_j}(\mathbb{S}^{d-1})$ for each $j \in \mathbb{N}_0$, it is easy to see that \eqref{ourFrames general case} can be replaced by
\begin{equation*}
   \mathcal{X}[(\Psi^j)_{j=0}^\infty] =  \{\sqrt{\omega_{j, r}}\,  T^d(g_{\eta_{j, r}})\Psi^j, \; r=1, \dots, r_j, \; j \in \mathbb{N}_0 \},
\end{equation*}
where the points $\eta_{j, r}\in \mathbb{S}^{d-1}$ and weights $\omega_{j, r}>0$ are chosen as in \eqref{quadrature S^(d-1)}. Frames of this form coincide with the well-established zonal/isotropic systems.

Now, let $(\Psi^j)_{j=0}^\infty$ be a $SO(d-2)$-invariant sequence with $\Psi^j \in \Pi_{N_j}(\mathbb{S}^{d-1})$ for each $j \in \mathbb{N}_0$. Let $\eta_{j, r}\in \mathbb{S}^{d-1}$ and $\omega_{j, r}>0$ be as in \eqref{quadrature S^(d-1)}. In addition, let $\eta_{j, s}' \in \mathbb{S}^{d-2}$ and  $\omega_{j, s}'>0$ be quadrature points and weights satisfying
\begin{equation}\label{quadrature S^(d-2)}
    \int_{\mathbb{S}^{d-2}} f \, \mathrm{d}\omega_{d-2} = \sum_{s=1}^{s_j} \omega_{j, s}' f(\eta_{j, s}') \qquad \text{for each } f \in \Pi_{2N_j}(\mathbb{S}^{d-2}).
\end{equation}
For $\eta' \in \mathbb{S}^{d-2}$, let $h_{\eta'}$ denote any rotation matrix in $SO(d-1)\subset SO(d)$ satisfying $$h_{\eta'} e^{d-1} = (\eta', 0)\in \mathbb{S}^{d-1}.$$ In this setting, it is not difficult to verify that  the systems \eqref{ourFrames general case} can be replaced by
\begin{equation*}
    \mathcal{X}[(\Psi^j)_{j=0}^\infty] = \left\{\sqrt{\omega_{j, r} \omega_{j, s}'}\,  T^d(g_{\eta_{j, r}} h_{\eta_{j, s}'})\Psi^j, \; r=1, \dots, r_j,\; s=1, \dots, s_j, \; j \in \mathbb{N}_0 \right\}.
\end{equation*}

\subsubsection{Steerable and Rotation-Invariant Frames}\label{subsubsec4.5.4}
If $(\Psi^j)_{j=0}^\infty$ is both $K$-steerable and $SO(d-2)$-invariant, a combination of the ideas presented above leads to quadrature measures of a particularly simple structure. Namely, let $\eta_{j, r} \in \mathbb{S}^{d-1}$, $\omega_{j, r}>0$ be as in \eqref{quadrature S^(d-1)} and, additionally, $\eta_{p}'\in \mathbb{S}^{d-2}$ with corresponding weights $w_p>0$ such that
\begin{equation}\label{eq2 quadrature S^d-2}
     \int_{\mathbb{S}^{d-2}} f \, \mathrm{d}\omega_{d-2} = \sum_{p=1}^{M} w_{ p} f(\eta_{ p}') \qquad \text{for each } f \in \Pi_{2K}(\mathbb{S}^{d-2}).
\end{equation}
Then, \eqref{ourFrames general case} can be replaced with
\begin{equation*}\label{steerable SO(d-2)-inv frame}
   \mathcal{X}[(\Psi^j)_{j=0}^\infty] = \{\sqrt{\omega_{j, r} w_p}\,  T^d(g_{\eta_{j, r}} h_{\eta_{p}'})\Psi^j, \; r=1, \dots, r_j,\; p=1, \dots, M, \; j \in \mathbb{N}_0 \}.
\end{equation*}

\section{Examples}\label{sec5}
The results derived in this article pave the way for a variety of new polynomial frames for $L^2(\mathbb{S}^{d-1})$, $d \geq 3$, which are both directional and localized. In this section, we present specific examples that we consider particularly interesting. Our constructions here are based exclusively on the orthonormal basis of spherical harmonics and the corresponding index set described in \hyperref[appendix A]{Appendix A}.

\subsection{Directional Wavelets}
In order to obtain frames $\mathcal{X}[(\Psi^j)_{j=0}^\infty]$ for $L^2(\mathbb{S}^{d-1})$ that are optimally localized in terms of the uncertainty principle \eqref{goh_goodman2}, we will first need to construct a suitable sequence of frequency filters. Let $\phi \in C^2([0, \infty))$ be a real-valued and non-increasing function with $\supp(\phi) = [0, 1]$ and $\phi(t)=1$ for each $t \in [0, 1/2]$. Then the map
\begin{equation*}
    \kappa\colon [0, \infty) \rightarrow \mathbb{R}, \qquad \kappa(t) = \sqrt{\phi^2(t/2)-\phi^2(t)}
\end{equation*}
is twice continuously differentiable with $\supp(\kappa) = [1/2, 2]$. To give an explicit example, we choose
\begin{equation*}
     \phi(t) = \begin{cases}
        1, \quad &t\in [0, 1/2),\\
        16(1-t)^3(12t^2-9t+2),  &t \in [1/2, 1],\\
        0 & t>1.
    \end{cases}
\end{equation*}
In this case, both $\phi$ and $\kappa$ are twice, but not three times, continuously differentiable. We can now define a sequence of window functions by
\begin{equation}\label{window1}
    \kappa_{1,j}\colon [0, \infty) \rightarrow \mathbb{R}, \quad   \kappa_{1,j}(n) = 2^{j(d-2)/2} \kappa\! \left( \frac{n}{2^{j-1}}\right), \qquad j \geq 1.
\end{equation}
Alternatively, we will also consider the windows
\begin{equation*}\label{window2}
    \kappa_{2,j}\colon [0, \infty) \rightarrow \mathbb{R}, \quad \kappa_{2, j}(n) =\begin{cases} 2^{j(d-2)/2} \sin\!\left(\pi \frac{n+1-2^{j-2}}{3\cdot2^{j-2}+2} \right), \quad &2^{j-2}\leq n \leq 2^j,\\
    0, & \text{else},
    \end{cases} \qquad j \geq 1.
\end{equation*}
We note that in a previous article \cite{bib11} functions of the form $\kappa_{2, j}$ were utilized for the construction of zonal frames that are optimally localized in terms of a spherical uncertainty principle \cite{bib45, bib42, bib38}.

In addition to the window functions defined above, we will also utilize certain directionality components $\zeta_{k}^{d, n} \in \mathbb{C}$. For a fixed $K\in \mathbb{N}_0$, which corresponds to the desired degree of directional sensitivity, such components can be used to ensure $K$-steerability of the corresponding system and to optimize the directionality under this constraint. As proposed in \cite{bib37}, for $d \geq 4$ we define
\begin{equation*}\label{zeta formula}
 \zeta_k^{d,n}=\delta_{k_2, 0} \begin{cases}
 0, \quad \text{if }k_1>K_n \text{ or if } K_n-k_1 \text{ is odd},  \\
\displaystyle(-1)^{\lfloor \frac{k_1}{2} \rfloor}  \sqrt{ \frac{\Gamma(\lambda) K_n !(k_1+\lambda) \Gamma(d+k_1-3)}{\Gamma(2\lambda)2^{K_n}(\frac{K_n-k_1}{2})! \Gamma(\lambda +\frac{K_n+k_1}{2}+1) k_1!}},  \quad\text{else},
\end{cases}
\end{equation*}

where $K_n = \min(K, n)$ and $\lambda = (d-3)/2$. This choice yields, in particular,
\begin{equation*}
    \sum_{k \in \mathcal{I}_n^d} \lvert  \zeta_k^{d,n} \rvert^2 = 1 \quad \text{for each } n \in \mathbb{N}.
\end{equation*}
For directionality components in the case $d=3$, we refer to \cite{bib21, bib9}. 

Let $d \geq 4$. We consider the usual setting $N_0 =0$ and $N_j = 2^{j}$ for $j \in \mathbb{N}$. With respect to the orthonormal basis of spherical harmonics given in \hyperref[appendix A]{Appendix A}, we define the polynomial sequences $(\Psi_1^j)_{j=0}^\infty$ and  $(\Psi_2^j)_{j=0}^\infty$ via
\begin{equation*}
    \Psi_i^0 \equiv 1, \qquad \Psi_i^j(n, k) = \kappa_{i, j}(n) \zeta_{k}^{d, n}, \quad j \in \mathbb{N}, \qquad i \in \{1, 2 \}.
\end{equation*}
Then, clearly, $\Psi_i^j \in \Pi_{N_j}(\mathbb{S}^{d-1})$ for each $j \in \mathbb{N}_0$. By \hyperref[lemmasteerable]{Lemma~\ref*{lemmasteerable}} and by \eqref{k_(d-m)>0}, these sequences are $K$-steerable and $SO(d-2)$-invariant. Moreover, it is easy to verify that $(\Psi_1^j)_{j=0}^\infty$ and  $(\Psi_2^j)_{j=0}^\infty$ satisfy the admissibility condition \eqref{eq32}. Hence, according to the discussion in \autoref{subsubsec4.5.4}, both systems
\begin{equation*}
   \mathcal{X}[(\Psi_i^j)_{j=0}^\infty] =\{\sqrt{\omega_{j, r} w_p}\,  T^d(g_{\eta_{j, r}} h_{\eta_{p}'})\Psi_i^j, \; r=1, \dots, r_j,\; p=1, \dots, M, \; j \in \mathbb{N}_0 \},
\end{equation*}
for $i \in \{1, 2 \}$, constitute a frame for $L^2(\mathbb{S}^{d-1})$. Here, the points $\eta_{j, r}\in \mathbb{S}^{d-1}$, $\eta_p'\in \mathbb{S}^{d-2}$ and nonnegative weights $\omega_{j, r}$, $w_p$ are as in \eqref{quadrature S^(d-1)} and \eqref{eq2 quadrature S^d-2}, respectively.

With regard to the assumptions of \hyperref[proplocalization]{Proposition~\ref*{proplocalization}}, it is easy to see that $(\Psi_1^j)_{j=0}^\infty$ and  $(\Psi_2^j)_{j=0}^\infty$ satisfy \ref{C1} and \ref{C2} for the standard setting $N_0 = M_0 =0$ and $N_j = 2^{j}$, $M_j= \max(1, 2^{j-2})$ for $j \in \mathbb{N}$. Moreover, a straightforward calculation shows that $(\Psi_1^j)_{j=0}^\infty$ satisfies condition \ref{C3}, whereas $(\Psi_2^j)_{j=0}^\infty$ satisfies \eqref{C4}. Hence, both frames $\mathcal{X}[(\Psi_1^j)_{j=0}^\infty]$ and $\mathcal{X}[(\Psi_2^j)_{j=0}^\infty]$ are optimally localized in terms of \eqref{goh_goodman2}, i.e.,
\begin{equation*}
    \mathrm{Var}_{\mathrm{S}}(\Psi_i^j) \sim 2^{-2j}, \qquad i \in \{1, 2 \}.
\end{equation*}
Furthermore, both frames exhibit an optimal directional sensitivity (of degree $K$), in the sense discussed in \autoref{subsec4.4}, as
\begin{equation}\label{highly directional}
    \langle T^d(h) \Psi_i^j,  \Psi_i^j\rangle_{\mathbb{S}^{d-1}}= \sum_{n=0}^{\infty}\lvert \kappa_{i, j}(n)\rvert^2 (\langle e^{d-1} , h e^{d-1}\rangle)^{\min(K, n)}, \quad h \in SO(d-1).
\end{equation}
For more details on the calculations that lead to \eqref{highly directional} we refer to \cite{bib37}.

We note that the frames $\mathcal{X}[(\Psi_1^j)_{j=0}^\infty]$ and $\mathcal{X}[(\Psi_2^j)_{j=0}^\infty]$ are constructed similarly to the directional wavelets in \cite{bib9} and \cite{bib37}. However, the pointwise localization estimates established for those wavelets require a high order of smoothness in the Fourier domain, which excludes the window function defined above.

As pointed out in \cite{bib37}, there is a convenient way to visualize functions that are $SO(d-2)$-invariant and concentrated at the North Pole, such as $\Psi_i^j$. To this end, we associate to each element $\xi$ in the unit tangent space $UT_{e^d}\mathbb{S}^{d-1} = \{(\eta', 0) \mid \eta' \in \mathbb{S}^{d-2}\}$ the corresponding curve
\begin{equation*}
    t \mapsto \cos t \, e^d + \sin t \, \xi, \quad t \in [0, \pi],
\end{equation*}
starting from the North Pole $e^d$. Clearly, $UT_{e^d}\mathbb{S}^{d-1}$ admits the parametrization 
\begin{equation*}
    \xi(\varphi, \eta'') = \cos \varphi \, e^{d-1} + \sin \varphi \, (0, 0, \eta''), \quad \varphi \in [0, 2\pi), \; \eta'' \in \mathbb{S}^{d-3}.
\end{equation*}
By the $SO(d-2)$-invariance of $\Psi_i^j$, it follows that the map
\begin{equation}\label{polarcoord beispiel1}
   \psi_i^j (t, \varphi) = \Psi_i^j(\cos t \, e^d + \sin t (\cos \varphi \, e^{d-1} + \sin \varphi \, (0, 0, \eta''))), \quad (t, \varphi) \in [0, \pi]\times[0, 2\pi),
\end{equation}
does not depend on $\eta'' \in \mathbb{S}^{d-3}$. Thus, as presented in \autoref{fig1} and \autoref{fig2} for $d=4$, the wavelets $\Psi_i^j$ can be visualized as functions on the unit ball $\mathbb{B}_\pi^2 = \{ x \in \mathbb{R}^2 \mid \| x \|_2 \leq \pi \}$ expressed in polar coordinates via \eqref{polarcoord beispiel1}. For visual clarity, all images have been re-scaled to take values between $-1$ and $1$.

\begin{figure}
\includegraphics[width=\textwidth]{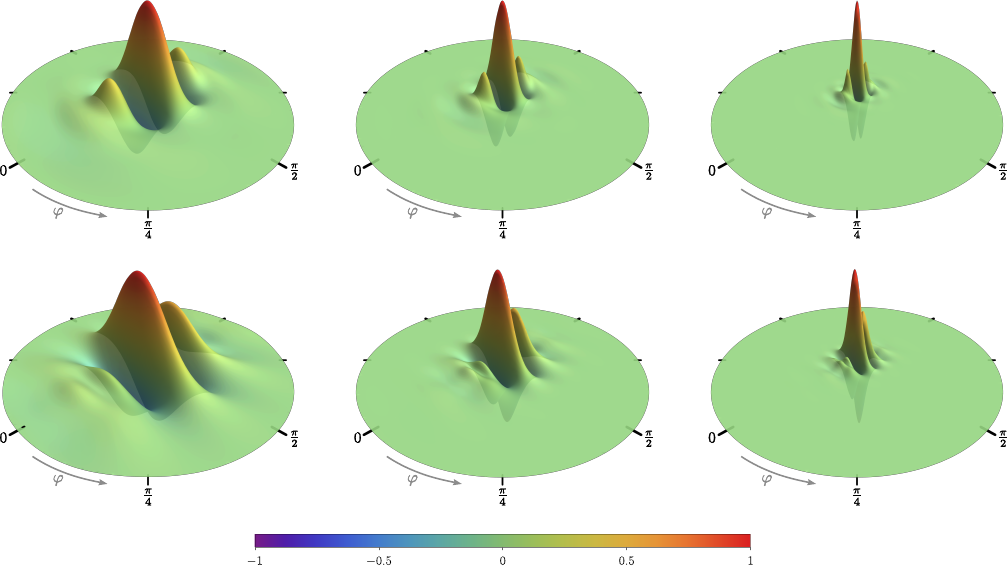}
\caption{Re-scaled directional wavelets $\psi_1^j (t, \varphi)$, $(t, \varphi) \in [0, 1]\times[0, 2\pi)$, for $K=4,9$ from top to bottom and $j=5, 6, 7$ from left to right}\label{fig1}
\end{figure}

\begin{figure}
\includegraphics[width=\textwidth]{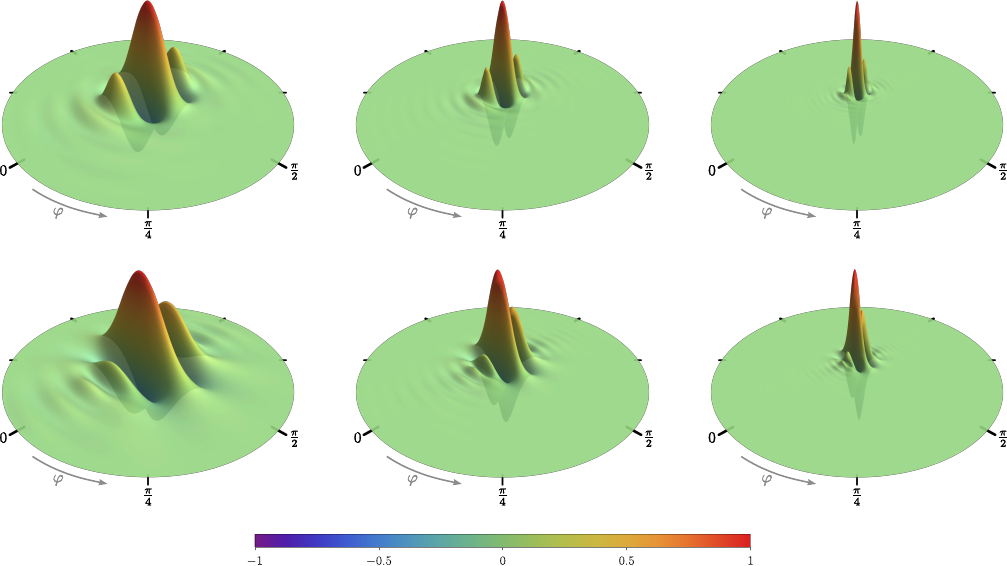}
\caption{Re-scaled directional wavelets $\psi_2^j (t, \varphi)$, $(t, \varphi) \in [0, 1]\times[0, 2\pi)$, for $K=4,9$ from top to bottom and $j=5, 6, 7$ from left to right}\label{fig2}
\end{figure}

\subsection{Curvelets}
As a second example, we construct frames that extend the two-dimensional second-genera\-tion curvelets introduced in \cite{bib1} to higher-dimensional spheres. Just like in the two-dimensional setting, these polynomial curvelets are not restricted in their directional sensitivity, giving them a clear advantage over competing systems such as spherical needlets and directional polynomial wavelets, which are steerable by design. Using the orthonormal basis of spherical harmonics given in \hyperref[appendix A]{Appendix A}, we define an initial sequence $(\Psi^j)_{j=0}^\infty$ as
\begin{equation*}
    \Psi^0 \equiv 1, \qquad  \Psi^j(n, k) = \kappa_{1,j}(n) \, \frac{\delta_{\lvert k_{d-2}\rvert, n}}{\sqrt{2}}, \quad j \in \mathbb{N}.
\end{equation*}
Here, the window functions $\kappa_{1,j}(n)$ are given by \eqref{window1}. A straightforward computation shows that
\begin{equation}\label{curvelet1}
    \Psi^j(\eta) = \sqrt{2}\sum_{n=0}^\infty \kappa_{1, j}(n) \, A_{(n, \dots, n)}^n \Re \{(\eta_2+ \mathrm{i}\eta_1)^n\}, \quad j \geq 1,
\end{equation}
where $\Re\{z\}$ denotes the real part of $z \in \mathbb{C}$. Clearly, \eqref{curvelet1} implies that $\Psi^j$ is concentrated at $e^2$. However, it might be convenient to work with a rotated version of the sequence $(\Psi^j)_{j=0}^\infty$ which is located at the North Pole. Thus, let $g_0 \in SO(d)$ be a rotation matrix such that $g_0 e^1 = e^{d-1}$, $g_0 e^2 = e^d$ and let
\begin{equation*}
    \Psi_{\scriptscriptstyle \mathrm{C}}^j = T^d(g_0)\Psi^j, \qquad j \in \mathbb{N}_0.
\end{equation*}
Clearly, we have
\begin{equation*}
    \Psi_{\scriptscriptstyle \mathrm{C}}^j(\eta) = \sqrt{2}\sum_{n=0}^\infty \kappa_{1, j}(n) \, A_{(n, \dots, n)}^n \Re \{(\eta_d+ \mathrm{i}\eta_{d-1})^n\}, \quad j \geq 1.
\end{equation*}
Consequently, the functions $\Psi_{\scriptscriptstyle \mathrm{C}}^j$ are localized at $e^d$ and the sequence $(\Psi_{\scriptscriptstyle \mathrm{C}}^j)_{j=0}^\infty$ is $SO(d-2)$-invariant. Moreover, it is easy to verify that $(\Psi_{\scriptscriptstyle \mathrm{C}}^j)_{j=0}^\infty$ satisfies the frame condition \eqref{eq32}. Thus, the system
\begin{equation*}
   \mathcal{X}[(\Psi_{\scriptscriptstyle \mathrm{C}}^j)_{j=0}^\infty] = \left\{\sqrt{\omega_{j, r} \omega_{j, s}'}\,  T^d(g_{\eta_{j, r}} h_{\eta_{j, s}'})\Psi_{\scriptscriptstyle \mathrm{C}}^j, \; r=1, \dots, r_j,\; s=1, \dots, s_j, \; j \in \mathbb{N}_0 \right\}
\end{equation*}
constitutes a frame for $L^2(\mathbb{S}^{d-1})$, where the points $\eta_{j, r}\in \mathbb{S}^{d-1}$, $\eta_{j, s}' \in \mathbb{S}^{d-2}$ and nonnegative weights $\omega_{j, r}$, $\omega'_{j, s}$ are as in \eqref{quadrature S^(d-1)} and \eqref{quadrature S^(d-2)}, respectively.

In \autoref{fig3}, the curvelets $\Psi_{\scriptscriptstyle \mathrm{C}}^j$ are visualized for $d=4$ in terms of the functions
\begin{equation*}
    \psi_{\scriptscriptstyle \mathrm{C}}^j (t, \varphi) = \Psi_{\scriptscriptstyle \mathrm{C}}^j(\cos t \, e^d + \sin t (\cos \varphi \, e^{d-1} + \sin \varphi \,e^{d-2})), \quad (t, \varphi) \in [0, \pi]\times[0, 2\pi).
\end{equation*}
Again, the presented images have been re-scaled to take values between $-1$ and $1$.

Under more restrictive assumptions, one can show that polynomial curvelets satisfy a strong direction-dependent localization bound, as previously observed in \cite[Proposition~2.4]{bib26} for the two-dimensional setting. The details will be presented in a future article.

\begin{figure}
\includegraphics[width=\textwidth]{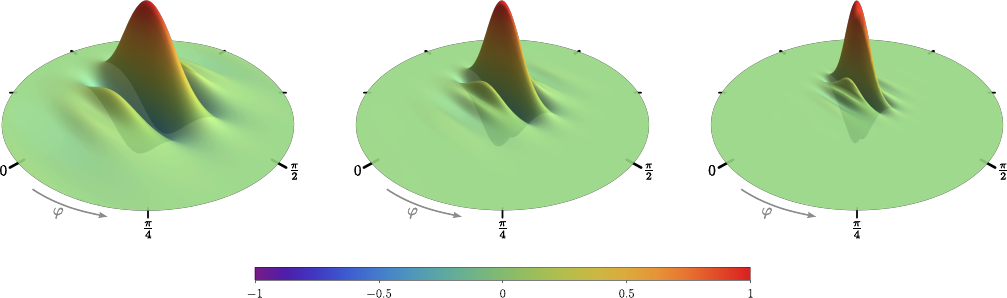}
\caption{Re-scaled curvelets $\psi_{\scriptscriptstyle \mathrm{C}}^j (t, \varphi)$, $(t, \varphi) \in [0, 1]\times[0, 2\pi)$, for $j=5, 6, 7$ from left to right}\label{fig3}
\end{figure}

\appendix
\addcontentsline{toc}{section}{Appendix}
\section*{Appendix}

\setcounter{equation}{0}\renewcommand\theequation{A.\arabic{equation}}



\subsection*{Appendix A: A Specific Orthonormal Basis of Spherical Harmonics}\label{appendix A}
The sphere $\mathbb{S}^{d-1}$ can be parameterized in terms of the spherical coordinates $\theta_1 \in [0, 2\pi)$, $\theta_2, \dots, \theta_{d-1}\in [0, \pi]$, via
\begin{equation*}
\eta(\theta_1, \theta_2, \dots, \theta_{d-1}) = \begin{pmatrix}
\sin \theta_{d-1}\, \cdots \, \sin \theta_2 \, \sin \theta_1 \\
\sin \theta_{d-1}\, \cdots \, \sin \theta_2 \, \cos \theta_1 \\
\sin \theta_{d-1}\,  \cdots\,  \sin \theta_3 \, \cos \theta_2 \\
\vdots \\
\sin \theta_{d-1} \, \cos \theta_{d-2} \\
\cos \theta_{d-1}
\end{pmatrix}.
\end{equation*}
In this setting, the (normalized) surface measure takes the form
\begin{equation*}
\mathrm{d}\omega_{d-1} = \frac{\Gamma(\frac{d}{2})}{2 \pi^{d/2}} \sin^{d-2}\theta_{d-1} \, \dots\,\sin \theta_2 \, \mathrm{d}\theta_1 \, \dots\, \mathrm{d}\theta_{d-1}.
\end{equation*}
With respect to spherical coordinates and for $d\geq 3$, a commonly used orthonormal basis (see, e.g., \cite[p.~466]{bib32}) is given by 
\begin{align}\label{y_n^k def}
Y_k^{d,n}(\theta_1, \dots, \theta_{d-1}) = A_k^n \prod_{j=0}^{d-3} C_{ k_j  - \lvert k_{j+1}\rvert}^{\frac{d-j-2}{2}+\lvert k_{j+1}\rvert}(\cos \theta_{d-j-1}) \, \sin^{\lvert k_{j+1}\rvert}(\theta_{d-j-1}) \, \exp(\mathrm{i}k_{d-2}\theta_1),
\end{align}
where  $k_0=n \in \mathbb{N}_0$ and $k=(k_1, \dots, k_{d-2})\in \mathcal{I}_n^d$ with
\begin{equation}\label{indexset}
\mathcal{I}_n^d = \{ (k_1, \dots, k_{d-2}) \in \mathbb{N}_0^{d-3}\times \mathbb{Z} : n\geq k_1 \geq \dots\geq k_{d-3}\geq \lvert k_{d-2}\rvert \}.
\end{equation}
The one-dimensional Gegenbauer polynomials $C_m^\lambda$ occurring in \eqref{y_n^k def} are defined as in \eqref{gegenbauer polynomials} and the normalization factor $A_k^n >0$ satisfies
\begin{align}\label{eq10}
 (A_k^n)^2 = \frac{2^{(d-4)(d-2)}}{\Gamma\!\left(\frac{d}{2}\right)} \prod_{j=0}^{d-3}\frac{2^{2\lvert k_{j+1}\rvert-j}(k_j-\lvert k_{j+1}\rvert)!(2k_j+d-j-2)\Gamma^2(\frac{d-j-2}{2}+\lvert k_{j+1}\rvert)}{\sqrt{\pi}\Gamma(k_j+\lvert k_{j+1}\rvert+d-j-2)}.
\end{align}
  For each $n \in \mathbb{N}_0$, $Y_k^{d, n}$ is a spherical harmonic of degree $n$ and the system $\{ Y_k^{d, n}, \; k \in \mathcal{I}_n^d\}$ is an orthonormal basis for $\mathcal{H}_n^d$. Consequently, $\{Y_k^{d, n}, \; k \in \mathcal{I}_n^d, \; n \in \mathbb{N}_0 \}$ is an orthonormal basis for $L^2(\mathbb{S}^{d-1})$.

\setcounter{equation}{0}\renewcommand\theequation{B.\arabic{equation}}

\subsection*{Appendix B: Proof of {\hyperref[lemmaUP]{Lemma~\ref*{lemmaUP}}}}\label{appendix B}
\begin{proof}
    In order to prove the assertion, we follow the methods in \cite{bib11, bib41} and start by considering the $d$-th component of the vector $\xi_0(f)$. 
    First, we obtain
    \begin{align*}
    \xi_0^d(f) = \int_{\mathbb{S}^{d-1}} x_d|f(x)|^2\, \mathrm{d}\omega_{d-1}(x) = \sum_{n = 0}^{\infty}\sum_{n' = 0}^{\infty}\sum_{k \in \mathcal{I}_n^d}\sum_{k' \in \mathcal{I}_{n'}^d} f(n,k)\,\overline{f(n',k')}\,I(n,n',k,k')
    \end{align*}
    by setting
   \begin{align}\label{I_n}
    I(n,n',k,k') = \int_{\mathbb{S}^{d-1}} x_dY_k^{d,n}(x)\overline{Y_{k'}^{d,n'}(x)}\, \mathrm{d}\omega_{d-1}(x)
    \end{align}
    and expanding $f$ into its Fourier series.
    In terms of spherical coordinates, substituting \eqref{y_n^k def} into \eqref{I_n} then yields
    \begin{align*}
    	& I(n,n',k,k') \notag \\
        & \qquad = \frac{\Gamma(\frac{d}{2})}{2\pi^{d/2}} A_k^nA_{k'}^{n'} \int_0^{2\pi} \exp(\mathrm{i}\theta_1(k_{d-2}-k'_{d-2})) \, \mathrm{d}\theta_1 \\
    	& \qquad \times \int_0^{\pi} C_{k_{d-3}-|k_{d-2}|}^{\frac{1}{2}+|k_{d-2}|}(\cos{\theta_2})C_{k'_{d-3}-|k'_{d-2}|}^{\frac{1}{2}+|k'_{d-2}|}(\cos{\theta_2})\sin^{|k_{d-2}|+|k'_{d-2}|+1}(\theta_2) \, \mathrm{d}\theta_2 \times \dots \\
    	& \qquad \times \int_0^{\pi} \cos{\theta_{d-1}}C_{n-|k_1|}^{\frac{d-2}{2}+|k_1|}(\cos{\theta_{d-1}})C_{n'-|k'_1|}^{\frac{d-2}{2}+|k'_1|}(\cos{\theta_{d-1}})\sin^{|k_1|+|k'_1|+d-2}{(\theta_{d-1})}\, \mathrm{d}\theta_{d-1}.
    \end{align*}
    We observe that $I(n,n',k,k')$ contains $d-3$ integrals which, after the substitution $t = \cos{\theta_{d-1-j}}$, take the form
    \begin{align}\label{d-3_int}
        \int_{-1}^1 C_{k_{j}-|k_{j+1}|}^{\frac{d-j-2}{2}+|k_{j+1}|}(t)\,C_{k'_{j}-|k'_{j+1}|}^{\frac{d-j-2}{2}+|k'_{j+1}|}(t)(1-t^2)^{\,(|k_{j+1}|+|k'_{j+1}|+d-j-3)/2} \, \mathrm{d}t, \quad j = 1, \dots, d-3.
    \end{align}
    Taking the factor $\delta_{k_{d-2}, k'_{d-2}}$ from the first integral into account and making repeated use of the orthogonality relation for Gegenbauer polynomials
    \begin{align}\label{ortho}
        \int_{-1}^1 C_n^{\lambda}(t)C_m^{\lambda}(t)(1-t^2)^{\, \lambda-\frac{1}{2}}\, \mathrm{d}t = \delta_{n,m} \int_{-1}^1|C_n^{\lambda}(t)|^2 (1-t^2)^{\, \lambda-\frac{1}{2}}\, \mathrm{d}t
    \end{align}
    as given in \cite[p.~418]{bib3}, we find that \eqref{d-3_int} reduces to
    \begin{align*}
        \delta_{k_{j},k'_{j}} \int_{-1}^1 |C_{k_{j}-|k_{j+1}|}^{\frac{d-j-2}{2}+|k_{j+1}|}(t)|^2(1-t^2)^{\,|k_{j+1}|+(d-j-3)/2}\, \mathrm{d}t, \quad j = 1,\dots, d-3.
    \end{align*}
    For the last integral, we now apply the three-term recurrence relation for Gegenbauer polynomials 
    \begin{align*}
        t\,C_{n-|k_1|}^{\frac{d-2}{2}+|k_1|}(t) = \frac{n-|k_1|+1}{2n+d-2}\,C_{n-|k_1|+1}^{\frac{d-2}{2}+|k_1|}(t) - \frac{n+|k_1|+d-3}{2n+d-2}C_{n-|k_1|-1}^{\frac{d-2}{2}+|k_1|}(t)
    \end{align*}
    as presented in \cite[p.~418]{bib3}.
    Together with \eqref{ortho}, this leads to
    \begin{align*}
        & \int_{-1}^{1} t\,C_{n-|k_1|}^{\frac{d-2}{2}+|k_1|}(t)\,C_{n'-|k_1|}^{\frac{d-2}{2}+|k'_1|}(t)(1-t^2)^{\,|k_1|+(d-3)/2}\, \mathrm{d}t  \\
    	& \qquad \qquad = \delta_{n+1,n'}\frac{n-|k_1|+1}{2n+d-2} \int_{-1}^1 |C_{n-|k_1|+1}^{\frac{d-2}{2}+|k_1|}(t)|^2 (1-t^2)^{\,|k_1|+(d-3)/2} \, \mathrm{d}t \\
    	& \qquad \qquad + \delta_{n-1,n'} \frac{n+|k_1|+d-3}{2n+d-2} \int_{-1}^1 |C_{n-|k_1|-1}^{\frac{d-2}{2}+|k_1|}(t)|^2 (1-t^2)^{\,|k_1|+(d-3)/2} \, \mathrm{d}t.
    \end{align*}
    Hence, we derive
    \begin{align*}
		& I(n,n',k,k')  = \frac{\Gamma(\frac{d}{2})}{\pi^{(d-2)/2}} \delta_{k,k'} \\
		& \times \left(\delta_{n+1,n'}A_k^nA_k^{n+1} \frac{n+|k_1|+d-3}{2n+d-2} \prod_{j=0}^{d-3} \int_{-1}^1 |C_{k_j-|k_{j+1}|}^{\frac{d-j-2}{2}+|k_{j+1}|}(t)|^2(1-t^2)^{\, |k_{j+1}|+ (d-j-3)/2} \, \mathrm{d}t \right. \\
		& \quad \;\left. + \delta_{n-1,n'}A_k^nA_k^{n-1}\frac{n+|k_1|+d-3}{2n+d-2} \prod_{j=0}^{d-3} \int_{-1}^1 |C_{k_j-|k_{j+1}|}^{\frac{d-j-2}{2}+|k_{j+1}|}(t)|^2(1-t^2)^{\, |k_{j+1}|+ (d-j-3)/2} \, \mathrm{d}t\right). 
	\end{align*}
    According to \cite[p.~437]{bib32}, the normalization factor $A_k^n > 0$ satisfies
    \begin{align*}
        \frac{1}{(A_k^n)^2} = \frac{\Gamma(\frac{d}{2})}{\pi^{(d-2)/2}} \prod_{j = 0}^{d-3} \int_{-1}^{1} |C_{k_j-|k_{j+1}|}^{\frac{d-j-2}{2}+|k_{j+1}|}(t)|^2(1-t^2)^{\, |k_{j+1}|+ (d-j-3)/2} \, \mathrm{d}t,
    \end{align*}
    which, when combined with \eqref{eq10}, gives
    \begin{align*}
        &\left(\frac{A_k^n}{A_k^{n+1}}\right)^2 = \frac{(2n+d-2)(n+|k_1|+d-2)}{(2n+d)(n+1-|k_1|)}, \\ 
		\intertext{and}
		&\left(\frac{A_k^n}{A_k^{n-1}}\right)^{2} = \frac{(2n+d-2)(n-|k_1|)}{(2n+d-4)(n+|k_1|+d-3)}.
    \end{align*}
    It follows that
    \begin{align}\label{I_2}
    I(n,n',k,k') &= \delta_{k,k'} \left(\delta_{n+1,n'} \sqrt{\frac{(n+|k_1|+d-2)(n-|k_1|+1)}{(2n+d)(2n+d-2)}} \right. \notag \\
    &\qquad \qquad + \left. \delta_{n-1,n'}\sqrt{\frac{(n+|k_1|+d-3)(n-|k_1|)}{(2n+d-2)(2n+d-4)}}\right).
    \end{align}
    By inserting \eqref{I_2} into $\xi_0(f)$, it is now easy to verify that \eqref{eqUP} holds.
\end{proof}



\sloppy 

\section*{Literature}       
\addcontentsline{toc}{section}{Literature}
\printbibliography[heading=none]

\end{document}